\theoremstyle{plain}
\newtheorem{thm}{Theorem}[section]
\newtheorem*{thm*}{Theorem}
\newaliascnt{proposition}{thm}
\newtheorem{prop}[proposition]{Proposition}
\newaliascnt{lemma}{thm}
\newtheorem{lemma}[lemma]{Lemma}
\newaliascnt{corollary}{thm}
\newtheorem{corollary}[corollary]{Corollary}
\theoremstyle{definition}
\newaliascnt{example}{thm}
\newtheorem{example}[example]{Example}
\newaliascnt{remark}{thm}
\newtheorem{remark}[remark]{Remark}
\newaliascnt{definition}{thm}
\newtheorem{definition}[definition]{Definition}
\newtheorem*{prop*}{Proposition}
\newtheorem*{lemma*}{Lemma}
\newcommand{\R}{\mathbb{R}}
\newcommand{\Z}{\mathbb{Z}}
\newcommand{\N}{\mathbb{N}}
\newcommand{\F}{\mathbb{F}}
\newcommand{\E}{\mathbb{E}}
\newcommand{\define}{\mathrel{\mathop:}=}
\newcommand{\id}{\mathbf{1}}
\newcommand{\Id}{\operatorname{I}} 
\newcommand{\aG}{G} 
\newcommand{\aH}{H} 
\newcommand{\sH}{{H_0}} 
\newcommand{\On}{\operatorname{O}(n)} 
\newcommand{\SO}{\operatorname{SO}} 
\newcommand{\Mov}{\textsc{Mov}} 
\newcommand{\Mod}{\textsc{Mod}} 
\newcommand{\Fix}{\textsc{Fix}} 
\newcommand{\Isom}{\operatorname{Isom}} 
\newcommand{\Cent}{\operatorname{C}} 
\newcommand{\Span}{\textsc{Span}}
\newcommand{\Range}{\operatorname{Im}}
\newcommand{\Ker}{\operatorname{Ker}}
\newcommand{\hconj}{[h]_\aH}
\newcommand{\coconj}{\operatorname{C}} 
\newcommand{\coconjH}[2]{\operatorname{C}_\aH(#1,#2)} 
\newcommand{\scoconjH}[2]{\operatorname{C}_{\sH}(#1,#2)} 
\newcommand{\cl}{\mathrm{c}\ell} 
\newcommand{\CLF}{\operatorname{CL}} 
\newcommand{\ctn}{\operatorname{ctn}} 
\newcommand{\Tnorm}{\operatorname{CT}} 
\newcommand{\LAb}{\mathcal{L}_{A,b} } 
\newcommand{\veps}{\varepsilon} 
\newcommand{\quer}[1]{\overline{#1}}
\newcommand{\til}[1]{\widetilde{#1}}
\newcommand{\mbf}{\mathbf}
\newcommand{\into}{\hookrightarrow}
\newcommand{\onto}{\twoheadrightarrow}
\newcommand{\nsgp}{\trianglelefteq} 
\newcommand{\qi}{\simeq_{\text{q.i.}}} 
\newcommand{\qdom}{\preccurlyeq} 
\newcommand{\qeq}{\simeq_{\mathrm{qe}}} 
\numberwithin{equation}{section}
\definecolor{amethyst}{rgb}{0.6, 0.4, 0.8}
\definecolor{kellygreen}{rgb}{0.3, 0.73, 0.09}
\definecolor{americanrose}{rgb}{1.0, 0.01, 0.24}
\begin{document}

\hypersetup{pdfauthor={Santos{ R}ego, Schwer},pdftitle={Conjugator length for groups of Euclidean isometries}}

\title[Conjugator length of locally compact groups of Euclidean isometries]{Conjugator length of locally compact groups of Euclidean isometries}

\author{Yuri Santos Rego}
\address{Yuri Santos Rego \\ 
University of Lincoln\\ 
Charlotte Scott Research Centre for Algebra\\ 
Brayford Pool, LN6 7TS Lincoln, UK}
\email{ysantosrego@lincoln.ac.uk}

\author{Petra Schwer}
\address{Petra Schwer, Institute of Mathematics, Heidelberg University, Im Neuenheimer Feld 205, 69120 Heidelberg, Germany}
\email{schwer@uni-heidelberg.de}

\thanks{This research was supported in part by the DFG research training group RTG2229 `Asymptotic Invariants and Limits of Groups and Spaces'.}


\subjclass{
	20F65, 
	20E45 
	22D05, 
	20F55, 
	20F10, 
}


\begin{abstract} 
We consider locally compact subgroups $H$ of the full isometry group $\Isom(\E^n)$ of Euclidean $n$-space which respect the splitting into an orthogonal and a translation subgroup. We prove that the conjugator length function of such groups either has zero growth, grows linearly, or is unbounded --- depending on the topology of the spherical part of $H$. 
Our theorem shows, in particular, that affine Coxeter groups and split crystallographic groups have linear growth for their conjugator length functions. 
\end{abstract}

\maketitle

\begin{center}
    We dedicate this paper to Martin Bridson on the occasion of his 60th birthday.  
\end{center}



\section{Introduction}\label{sec:Intro}

The growth of the conjugator length function has been investigated for various classes of finitely generated groups for more than 30 years. 
There has been a particular interest in groups acting on nice spaces.  
As a worst case scenario, groups with properly discontinuous, cocompact actions on a CAT$(0)$-space have conjugator length functions growing at most exponentially  \cite{BridsonHaefliger}. 
It is a broad problem to pin down the exact growth rate of this function for prominent (classes of) groups.

In this article we extend the notion of conjugator length function to locally compact (Hausdorff) groups, see \Cref{sec:coarse-geometry}, adding to the list of groups for which this function grows linearly while also observing an interesting dichotomy of cases. See \Cref{sec:mainsetup} for the precise definition of split subgroups $T_\aH \rtimes \sH$ of $\Isom(\E^n)$,  
and \Cref{def:essentialH0} for the essential part $\til{\sH}$ of $\sH$, which characterizes the action of $\sH$ on the translation lattice $L_H$ associated to $T_H$. 

\begin{thm}
	\label{thm:lineargrowth}
	Let $\aH=T_H\rtimes \sH$ be a split, locally compact subgroup of $\Isom(\E^n)$. Let $\til{\sH}$ be the essential part of $\sH$, write $\CLF: \N \to \Z_{\geq 0}$ for the conjugator length function of $H$. Then:
	\begin{enumerate} 
		\item\label{trivialGrowth} The conjugator length function $\CLF$ has trivial (i.e., zero) growth if, and only if, the spherical part $\sH$ and the translation part $T_H$  of $\aH$ commute.
	\end{enumerate} 
	Suppose that $T_H$ and $\sH$ do not commute. Then: 
	\begin{enumerate}[resume] 
		\item\label{~H0discrete}\label{1isolated} 
		The conjugator length function $\CLF$ of $H$ grows linearly if, and only if, $\til\sH$ is discrete.
		\item\label{~H0nondiscrete}\label{1notisolated} 
		$\CLF$ is unbounded (in the sense that $\CLF(m)=\infty$ for all but finitely many $m \in \N$) if, and only if, $\til\sH$ is not discrete.
	\end{enumerate} 
\end{thm}

It is known to experts that the conjugator length function is not necessarily preserved under quasi-isometries. Our \Cref{thm:lineargrowth} furnishes a long list of (pairs of) familiar topological groups that are quasi-isometric but have distinct conjugator length functions.

Observe that a trivial action of $\sH$ on $\Span_\R(L_H)$ implies  splitting of $\aH$ as a direct product, and that discreteness of $\til\sH$ is equivalent to finiteness in our setting. 
If $\aH$ is abelian or compact, then the conjugator length function of $\aH$ has zero growth by item~\eqref{trivialGrowth}.
The following corollary of our main result is immediate from  item~\eqref{1isolated}. 

\begin{corollary}
The conjugator length functions of affine Coxeter groups and of split crystallographic groups grow linearly.
\end{corollary}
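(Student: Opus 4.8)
The plan is to prove the two bounds separately: the upper bound falls out of \Cref{thm:lineargrowth} once the hypotheses are checked, and I would add a matching linear lower bound by hand so that ``grows linearly'' means $\CLF(n)\asymp n$ rather than merely $\CLF(n)\qdom n$.

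\textbf{Upper bound.} First I would identify each group in the statement as a split, locally compact subgroup of $\Isom(\E^n)=\R^n\rtimes\On$. For an affine Coxeter group $W$ I would fix a special vertex and take it as the origin; this realizes the finite point group $W_0$ as an honest subgroup of $\On$ and exhibits $W=T\rtimes W_0$, with $T$ the (co)root translation lattice, as a discrete subgroup respecting the orthogonal--translation splitting. For a split crystallographic group $\Gamma$ the decomposition $\Gamma=L\rtimes P$ with $L$ a full-rank lattice and $P\le\On$ the finite point group is part of the hypothesis and again respects the splitting. In both cases the group is discrete, hence locally compact in the subspace topology, so \Cref{thm:lineargrowth} applies and gives $\CLF(n)\qdom n$.

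\textbf{Lower bound.} To rule out sublinear behaviour (as would occur for a pure lattice $\Z^n$) I would exhibit conjugate pairs forcing long conjugators. Assuming the point group is nontrivial, pick $\rho\neq\id$ in it, viewed as an element of $\On$ fixing the origin, and a lattice vector $w$ with $\rho w\neq w$; such $w$ exists because the lattice spans $\R^n$ while $\operatorname{Fix}(\rho)$ is a proper subspace. Writing $t_v$ for translation by $v$, set $u\define\rho$ and $v_k\define t_{kw}\,\rho\,t_{-kw}=t_{k(w-\rho w)}\,\rho$, so that $|u|=O(1)$ and $|v_k|\asymp k$. Every conjugator $g$ with $gug^{-1}=v_k$ has the form $g=t_{kw}h$ with $h\in\Cent_\Gamma(\rho)$, and a direct computation shows the translation part of any element of $\Cent_\Gamma(\rho)$ lies in $\operatorname{Fix}(\rho)$. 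Hence $g$ moves the origin by a vector whose component orthogonal to $\operatorname{Fix}(\rho)$ is exactly $k$ times that of $w$, forcing a displacement $\gtrsim k$; since the group is cocompact, word length is comparable to displacement, so every conjugator has length $\gtrsim k\asymp|v_k|$. Taking $n\asymp|v_k|$ gives $\CLF(n)\succcurlyeq n$, and combined with the upper bound $\CLF(n)\asymp n$.

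\textbf{Main obstacle.} All of the conceptual weight sits in \Cref{thm:lineargrowth}; the remaining work is bookkeeping, and I expect two points to need care. First, in the affine Coxeter case one must choose the basepoint so that the extension $1\to T\to W\to W_0\to 1$ splits compatibly with $\R^n\rtimes\On$, i.e.\ verify that a special vertex exists and renders $W_0$ a genuine linear complement. Second, the lower bound rests on the \v{S}varc--Milnor quasi-isometry between the group and $\E^n$, specifically that the word length of $t_{kw}h$ is bounded below by a constant times the Euclidean displacement; this is precisely where one rules out the centralizer shortening $g$. I would also flag the caveat that a nontrivial point group is needed for the lower bound, which holds for all affine Coxeter groups (generated by reflections) and for crystallographic groups with nontrivial point group.
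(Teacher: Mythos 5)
Your proposal is correct, and it in fact does more than the paper, which states this corollary with no proof at all, treating it as an immediate consequence of \Cref{thm:lineargrowth}. Your upper bound is exactly the paper's implicit argument: affine Coxeter groups, split at a special vertex as $T\rtimes W_0$, and split crystallographic groups $L\rtimes P$ are discrete (hence locally compact) split subgroups of $\Isom(\E^n)$, so the theorem gives $\CLF \qdom n\mapsto n$. The genuine difference is your lower bound, and it is a worthwhile addition: since \Cref{thm:lineargrowth} only asserts growth \emph{at most} linear, whereas ``grows linearly'' in the paper's quasi-equivalence framework means $\CLF \qeq n \mapsto n$, a matching lower bound really is needed and the paper supplies none. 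Your construction checks out: the coconjugation set from $\rho$ to $t_{k(w-\rho w)}\rho$ is the coset $t_{kw}\Cent_\Gamma(\rho)$; a direct computation in the semidirect product (writing $h=t^{\mu}m$ and imposing $h\rho h^{-1}=\rho$) forces $\mu\in\Fix(\rho)$; hence every conjugator displaces the origin by at least $k\,\bigl\|\mathrm{proj}_{\Fix(\rho)^{\perp}}(w)\bigr\| > 0$, and the \v{S}varc--Milnor quasi-isometry (the actions being proper and cocompact) converts displacement into word length, giving $\CLF(n)\succcurlyeq n$. Your closing caveat is also correct and worth keeping: a split crystallographic group with trivial point group is $\Z^n$, whose conjugator length function is constant zero by the paper's convention, so the corollary as literally stated needs a nontrivial point group --- automatic for affine Coxeter groups, which are generated by reflections, but a genuine restriction in the crystallographic case.
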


A surprisingly long list of results yield linear growth. The oldest such example we found is due to Lys\"enok \cite{Lysenok} for word hyperbolic groups. Other examples include mapping class groups \cite{Tao} and some $S$-arithmetic metabelian groups such as Baumslag--Solitar groups $\mathtt{BS}(1,p)$ and lamplighters $\F_p \wr \Z$ \cite{Sale}. The most recent result we are aware of is by Bridson, Riley, and Sale \cite{BRS} establishing linear growth within a class of free-by-cyclic groups. 
It is often very helpful if the conjugator length function has linear growth. 
As a consequence, for example, 
an exponential-time algorithm to solve the conjugacy problem can be built for groups that have decidable word problem and linear conjugator length function; see, e.g., \cite[p.~416]{Tao}.

Many other interesting classes of groups sport nonlinear growth. 
To name just a few: the growth is at most quadratic for prime $3$-manifold groups \cite{BehrstockDrutu,Sale} and for Thompson's group $F$ \cite{BelkMatucci}, and at most cubic in free soluble groups \cite{Sale-freeSolvable}. Moreover, Bridson and Riley recently showed that all polynomials can appear as conjugator length functions of finitely presented groups \cite{BridsonRiley}. The same authors also establish polynomial upper bounds on the lengths of minimal conjugators in 2-step nilpotent groups \cite{BridsonRiley2}.

\medskip

Let us comment on the proof of \Cref{thm:lineargrowth}. 
The main idea is to use the geometric characterization of set set of conjugating elements, called transporting set\footnote{In \cite{MST4} this set was called coconjugation sets}, as provided by \cite{MST4}. We proceed as follows: 
For a pair of conjugated elements $h,h'$ whose lengths sum up to at most $m$ we aim to control the length of a shortest possible conjugating element (alternatively referred to as \emph{transporter}) $k$. 
As a first step, we prove in \Cref{prop:reduce-to-translations} that one can consider the length of the translation vector $\lambda$ of $k=t^\lambda u$ instead of the word length of $k$. 
To estimate the norm of $\lambda$ we make use of a result in \cite{MST4}, restated as \Cref{thm:coconj}, ensuring that all possible $\lambda$ are contained in an affine subspace which appears as a set of solutions of a linear system of equations determined by $h$ and $h'$; see Equation~\ref{eq:eta}. 
We then resort to Moore--Penrose pseudoinverses, see \Cref{sec:pseudoinverse}, which yield explicit minimizing solutions to those systems, together with some group theoretic properties of $H$ summarized in \Cref{sec:mainsetup}. A straightforward construction of conjugated pairs allows us to rule out the case~\eqref{trivialGrowth} of no growth and obtain a general lower bound of linear growth; see \Cref{prop:Tnorm-lower-bound}.

The case of \Cref{thm:lineargrowth}\eqref{1isolated} resembles a scenario where the spherical subgroup $\sH$ is essentially discrete --- summarized under the formal assumption that $\til\sH$, as conceived in \Cref{def:essentialH0}, is discrete. This step involves estimates that are carried out in the proof of \Cref{prop:Tnorm-growth-discrete,prop:Tnorm-growth-essential-restriction}. 
See \Cref{ex:conjugatornorm} and \Cref{fig:conjugatornorm} for a picture that illustrates these ideas in the case of a Coxeter group $W$ of type $\widetilde{\mathtt{A}}_2$, in which the point group $W_0$ is finite.  
Otherwise, if $\til\sH$ is not discrete and $\CLF$ is not trivial, we prove that one may choose $h, h'$ in such a way that the norm of the translation vector of a transporter becomes arbitrarily large. We refer the reader to the proof of \Cref{prop:Tnorm-growth-nonisolated} for details.

The formal proof of our main theorem reads as follows. 
 
\begin{proof}[Proof of \Cref{thm:lineargrowth}]
	Item~\eqref{trivialGrowth} is just \Cref{cor:thecaseofzerogrowth}. 
	The other cases rely on \Cref{prop:Tnorm-lower-bound} which establishes a linear lower bound in both cases. Linear growth in case $\til\sH$ is discrete is covered by \Cref{cor:thecaseoflineargrowth}. 
	\Cref{prop:Tnorm-growth-nonisolated} implies that, in case $\til\sH$ is nondiscrete then the growth, if nontrivial, is unbounded. Because of the mutually exclusive cases on the conditions on $\sH$ the equivalences stated in items~\eqref{~H0discrete} and~\eqref{~H0nondiscrete} follow. 
\end{proof}

\subsection*{Acknowledgements} 
First, we would like to thank Martin Bridson for his beautiful talk in Karlsruhe in December 2024. It was this talk that motivated us to look into this question. Happy birthday, Martin! We also thank Karel Dekimpe, Linus Kramer, Bianca Marchionna, and Jos\'e Pedro Quintanilha 
for helpful discussions about locally compact and Lie groups. 
Further we owe thanks to the anonymous referee and to Lukas Vandeputte for their invaluable comments on an earlier version of this paper and pointing out an issue with the previous statement of \Cref{thm:lineargrowth}. 
We thank Vandeputte for drawing our attention to, and kindly supplying, \Cref{ex:Lukas}(2). Marc Burger suggested to follow the french tradition of Borel
 and others and write \emph{transporting set} in place of \emph{coconjugation set}. YSR thanks the Research Station Geometry + Dynamics in Heidelberg and their staff for their hospitality in May 2025 and March 2026. PS is grateful for being able to spend Spring 2025 at the IAS in Princeton.

\section{The groups we study and the tools we use}\label{sec:preliminaries}

In \Cref{sec:coarse-geometry} we extend the notion of conjugator length functions to the class of locally compact Hausdorff topological groups, collect some elementary properties of split subgroups of $\Isom(\E^n)$ in \Cref{sec:mainsetup}, recall geometric results on conjugation from \cite{MST4} in \Cref{sec:coconjugation-sets}, and finally cite from the literature some basic facts on pseudoinverses in \Cref{sec:pseudoinverse}. 

\subsection{Conjugator length and coarse geometry of locally compact groups}
\label{sec:coarse-geometry}

Interest in the conjugator length function for finitely generated (discrete) groups has been rapidly increasing in recent years. Here we briefly note that, similarly to Dehn functions, conjugator length is equally meaningful for compactly generated groups, so let us provide definitions. We disclaim that all topological groups we treat are Hausdorff.

Let $G$ be a locally compact topological group, and suppose $G$ is compactly generated. That is $G = \langle S \rangle$ for some compact subset $S \subseteq G$. The word metric $d_S$ for $G$ with respect to $S$ is geodesically adapted (see \cite[Chapter~4B]{Cornulier--delaHarpe}) and enjoys properties akin to the word metric of a finitely generated group and  a similar theory of (coarse) geometry for $G$ is available; we refer the reader to \cite{Cornulier--delaHarpe} for details. We may then consider, as usual, the corresponding word length $\ell_S = d_S(-,\id)$, the distance to the identity $\id \in G$.

The \emph{conjugator length} $\cl(h,h')$ of two conjugate elements $h,h'\in G$ measures the length of a shortest possible transporter. That is,  
\[
\cl(h,h')\define \min \{ \ell_S(k) : khk^{-1}=h' \text{ in } G \}. 
\]
Note that for compactly generated locally compact groups the minimum is always attained. 

\begin{definition}
\label{def:conjugatorlengthfunction}
The \emph{conjugator length function} of the compactly generated, locally compact group $G$ (a priori with respect to a compact generating set $S$) is the map
\[
\CLF: \N \to \Z_{\geq 0}, \quad \CLF(m) \define \sup\{ \cl(h,h') \, : \,  h\sim h', \, \text{ and } \, \ell_S(h)+\ell_S(h')\leq m\},
\]
where $\sim$ is the conjugation relation. 
We adopt the convention that $\CLF$ is constant equal to zero if $G$ is abelian. 
\end{definition}

Given nondecreasing functions $f,g : \R_{\geq 0} \to \R_{\geq 0}$, we say that \emph{$g$ quasi-dominates $f$}, written $f \qdom g$, if there exists an increasing affine function $a : \R \to \R$ such that $f(r) \leq a\circ g\circ a(r)$ for all $r\in\R$ with $a(r) \geq 0$. (By increasing affine function on $\R$ we mean a map $a(r)=Cr+D$ for some $C, D\in\R$ with $C > 0$.) If $f \qdom g$ and $g \qdom f$, we call $f$ and $g$ \emph{quasi-equivalent} and write $f \qeq g$. That $\qeq$ is an equivalence relation is an easy exercise. The \emph{growth rate} of $f$ is its equivalence class under $\qeq$, and we adopt common terminology. For instance, $f$ grows polynomially of degree $d$ in case $f\qeq (r \mapsto r^d)$, or exponentially when $f \qeq (r \mapsto \alpha^r)$ for some $\alpha > 1$, or has zero (or trivial) growth when $f \qeq (r \mapsto \text{constant})$. We remark that $f$ growing at least linearly is the same as $f$ quasi-dominating an increasing affine map. 

Note that $\CLF$ can be interpreted in an obvious way as a nondecreasing function from $\R_{\geq 0}$ to $\R_{\geq 0}$: set $\CLF(r) = \CLF(\lceil r \rceil)$ for $r \in \R_{\geq 0}$. We can then talk about its growth rate as above.

\begin{example} \label{ex:compactCLFzero}
If $G$ is compact, take $S = G$. Then $\CLF \qeq \mathbf{0}$, where $\mathbf{0} : \R_{\geq 0} \to \R_{\geq 0}$ is the zero map. Indeed, $\cl(h,h') \leq 1$ for all conjugated pairs $h,h'\in G$. Hence 
$\mathbf{0}(r) = 0 \leq \CLF(r) = \mathrm{id} \circ \CLF \circ \, \mathrm{id}(r)$, 
whereas  
$\CLF(r) \leq 1 =0+1 = a \circ \mathbf{0} \circ a(r)$, 
where $a(x) = x+1$. 
\end{example}

\begin{lemma} \label{lem:verybasic}
Two conjugator length functions of a compactly generated locally compact group $G$, induced by two (distinct) compact generating sets, are quasi-equivalent. 

In particular, the rate of growth of a conjugator length function for $G$ does not depend on the choice of compact generating set. 
\end{lemma}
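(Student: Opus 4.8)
The plan is to reduce the statement to the standard fact that the word metrics attached to two compact generating sets are large-scale bi-Lipschitz, and then to push that comparison through the two nested maxima defining $\CLF$. Write $S$ and $T$ for the two compact generating sets, and decorate the associated word lengths, conjugator lengths, and conjugator length functions as $\ell_S,\ell_T$, $\cl_S,\cl_T$, and $\CLF_S,\CLF_T$. Since $\CLF_S\qeq\CLF_T$ follows from $\CLF_S\qdom\CLF_T$ together with the symmetric statement, it suffices to prove one quasi-domination and then swap the roles of $S$ and $T$.

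First I would recall from \cite[Chapter~4B]{Cornulier--delaHarpe} that the identity map $(G,d_S)\to(G,d_T)$ is a quasi-isometry; concretely, because $T$ is compact and the sets $\operatorname{int}(S^k)$ exhaust $G$, one has $T\subseteq S^N$ for some $N\in\N$, and symmetrically $S\subseteq T^M$. Concatenating expressions then yields the pointwise bounds $\ell_S(g)\le N\,\ell_T(g)$ and $\ell_T(g)\le M\,\ell_S(g)$ for all $g\in G$ (the usual additive constants, if one prefers a large-scale bi-Lipschitz formulation, are harmless and get absorbed below). Next I would transport this to conjugator lengths: if $k$ realises $\cl_T(h,h')$, that is $khk^{-1}=h'$ and $\ell_T(k)=\cl_T(h,h')$, then $\ell_S(k)\le N\,\ell_T(k)$, so since $\cl_S(h,h')$ is a minimum over \emph{all} conjugators we get the pointwise comparison
\[
\cl_S(h,h')\le N\,\cl_T(h,h').
\]

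Now I would run the comparison at the level of the functions. Fix conjugate $h,h'$ with $\ell_S(h)+\ell_S(h')\le n$. Then $\ell_T(h)+\ell_T(h')\le M\bigl(\ell_S(h)+\ell_S(h')\bigr)\le Mn$, so by definition of $\CLF_T$ we have $\cl_T(h,h')\le \CLF_T(Mn)$, and therefore $\cl_S(h,h')\le N\,\CLF_T(Mn)$. Taking the maximum over all such pairs gives
\[
\CLF_S(n)\le N\,\CLF_T(Mn).
\]
Setting $C\define\max\{M,N\}$ and $a(x)\define Cx$, and using that $\CLF_T$ is nondecreasing (its defining set grows with the argument), one obtains $\CLF_S(n)\le a\bigl(\CLF_T(a(n))\bigr)$, which is exactly $\CLF_S\qdom\CLF_T$. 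Interchanging $S$ and $T$ yields $\CLF_T\qdom\CLF_S$, hence $\CLF_S\qeq\CLF_T$; the ``in particular'' clause is then immediate, since the growth rate is by definition the $\qeq$-class. The only genuinely non-formal point — and thus the main obstacle, though a mild one — is the existence of the integers $M,N$, i.e.\ that a compact set lies in a finite power of a compact generating set; this is where one leans on the exhaustion of $G$ by the interiors of the $S^k$ and on \cite{Cornulier--delaHarpe}, the rest being the same bookkeeping as in the finitely generated case.
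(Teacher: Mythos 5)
Your proposal is correct and is essentially the paper's own argument: the paper likewise reduces everything to the fact that two compact generating sets give bilipschitz-equivalent word metrics (citing \cite[Proposition~4.B.4(3)]{Cornulier--delaHarpe}), and simply declares the remaining bookkeeping ``immediate.'' You have just written out explicitly the transport of the bilipschitz bounds through the minimum defining $\cl$ and the maximum defining $\CLF$, which is exactly the intended content.
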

\begin{proof}
This is immediate: if $S$, $S'$ are compact generating sets for $G$, then $(G,d_S)$ is bilipschitz equivalent to $(G,d_{S'})$ via the identity map; see \cite[Proposition~4.B.4(3)]{Cornulier--delaHarpe}.
\end{proof}

It is generally not true that the growth rate of $\CLF$ is a quasi-isometry invariant. Note that this is not conflicting with \Cref{lem:verybasic} as it relies on a much stronger coarse-geometric relationship, namely bilipschitz equivalence. 

In order to understand conjugator length growth, it should be clear that one needs a good understanding of the set of all elements $w\in G$ conjugating a given $h$ to a given $h'$. We call this set the \emph{{transporting} set} from $h$ to $h'$ and denote it by $\mathrm{C}_G(h,h')$. 
This allows us to write 
\[
\cl(h,h') = \min\{ \ell_S(k) \, : \, k\in\mathrm{C}_G(h,h') \}. 
\]
Those sets are inspected in \Cref{sec:coconjugation-sets}, but we take a moment to draw attention to two instructive examples.

\begin{example} \label{ex:Lukas}
The following two groups act geometrically (in the sense of \cite[Definition~4.C.1]{Cornulier--delaHarpe}) on Euclidean spaces and are thus CAT$(0)$ groups by the \v{S}varc--Milnor lemma; see \cite[Theorem~4.C.5]{Cornulier--delaHarpe}. However, they show distinct conjugator growth behavior.
\begin{enumerate}
\item Consider the infinite dihedral group $D_{\infty} \cong C_2 \ast C_2 \cong \langle s,t \mid s^2 = t^2 = 1 \rangle$. 
Elements of $D_\infty$ are either of infinite order --- in which case they are conjugate to the product $st$  and have even word length --- or are involutions --- in which case they are conjugate to $s$ or to $t$  and have odd word length. It is straightforward to show that the only conjugated pairs of $\langle st \rangle$ are $h = (st)^m$ and $h'=(st)^{-m}$, in which case $\mathrm{C}_{D_\infty}(h,h') = \{s,t\}$. Thus $\cl(h,h') = 1$ in these cases. On the other hand, one checks (e.g., using normal forms) that involutions of the form $h = (st)^m s (ts)^m$ and $h'= s$ are conjugate with $(st)^m \in \mathrm{C}_{D_\infty}(h,h')$ being a shortest transporter. (With similar situation swapping roles of $s$ and $t$.) Hence $\CLF$ grows linearly for $D_\infty$.

\smallskip

\item The following example is due to Lukas Vandeputte. Consider the connected Lie group $H = \R^2 \rtimes \SO(2)$. Note that it is compactly generated by (the underlying subset) $S = ([0,1] \times [0,1]) \times \SO(2)$. Now choose a small enough $\veps > 0$ and a positive angle $\theta \in (0, \veps) \subset \R$. Consider the unit vector $e_1 = (1,0) \in \R^2$ and the rotation matrix 
\[ R_\theta = \left( \begin{matrix} \cos(\theta) & -\sin(\theta) \\ \sin(\theta) & \cos(\theta) \end{matrix} \right). \]
Note that $\Id - R_\theta$ 
is invertible 
as $0 < \theta < \veps$. Direct computations show that the elements $h_\theta = (0, R_\theta)$ and $h_\theta'= (e_1, R_\theta)$ are conjugated in $H$, and
\[ \mathrm{C}_H(h_\theta,h_\theta') = \{ ((\Id-R_\theta)^{-1}(e_1), u) \in H \mid u \in \SO(2) \}. \]
Decreasing the constant $\veps > 0$ chosen at the beginning and hence decreasing the angle $\theta$, we see that the vector $(\Id-R_\theta)^{-1}(e_1) = \frac{1}{2-2\cos(\theta)}\cdot (1-\cos(\theta),\sin(\theta))$ is arbitrarily far away from the unit square $[0,1]\times [0,1] \subseteq \R^2$ since the multiplying factor $\frac{1}{2-2\cos(\theta)}$ increases. Hence the word length $\ell_S(((\Id-R_\theta)^{-1}(e_1), u))$ is arbitrarily large. On the other hand, $\ell_S(h_\theta) = 1$  and $\ell_S(h_\theta') = 2$. Thus, for every $n \geq 3$, by taking $\theta \to 0$ we always have conjugated pairs $h_\theta$, $h_\theta'$ yielding arbitrarily large $\cl(h_\theta,h_\theta')$. Thus, in $H$, 
$\CLF(m) = \infty$ 
for all $m \geq 3$.
\end{enumerate}
\end{example}

\subsection{Our setup --- split locally compact Euclidean isometry groups}
\label{sec:mainsetup}

Recall that the full isometry group of $\E^n$ splits as a semidirect product $\Isom(\E^n) = T \rtimes \On$, where $T \cong \R^n$ is the translation subgroup of~$\aG$ and $\On$ is the group of orthogonal transformations of $\E^n$ with the standard inner product. We view $\Isom(\E^n)$ as a locally compact group using its usual topology as a Lie group. In particular, $\On$ is compact in $\Isom(\E^n)$ and the group isomorphism $T \cong \R^n$ is topological, so that $T$ carries a Euclidean topology. 

We say a subgroup $H$ of $\Isom(\E^n)$ \emph{splits} if  $\aH =  T_\aH \rtimes \sH$ with $T_\aH = T \cap \aH$ and $\sH = H \cap \On$. In case $H$ is a split topological subgroup, we require the split to be topological as well and in particular with a continuous action. 
Translations of $\E^n$ by $\lambda \in \R^n$ are written as $t^\lambda$. 
Moreover, the action of the orthogonal part $\sH = H \cap \On$ on the subgroup of translations $T_\aH = T \cap \aH$ is compatible with the action of $\sH$ on $\R^n$. That is, $h_0 t^{\lambda} h_0^{-1} = t^{h_0(\lambda)}$ for all $h_0 \in \sH$ and all $\lambda \in L_\aH$; cf. \cite{MST5}. 
The \emph{translation lattice} of $H$ is defined as $L_\aH = \{ \lambda \in \R^n \mid t^\lambda \in T_\aH \}$ and is by definition an $H_0$-invariant submodule of the ambient space $\R^n$, and we put $V=\Span_\R(L_H)\subset \R^n$. (Despite its name, $L_\aH$ is not necessarily a lattice; see \Cref{lem:structureofH}(4) below.)
As in \cite{MST4} we will call $t^\lambda$ the \emph{translation part} and $h_0$ the \emph{spherical part} of~$h$. Accordingly, we call $T_\aH$ and $\sH$ the \emph{translation} and \emph{spherical} subgroups of $H$, respectively. 
We shall need the following observations.

\begin{lemma} 
\label{lem:structureofH} 
Let $H = T_\aH \rtimes \sH$ be a split, locally compact subgroup of $\Isom(\E^n)$. Then:
\begin{enumerate}
\item The spherical part $\sH$ is compact.
\item $H$ is compactly generated.
\item The inclusion of the translation part $T_H$ into $T$ is a quasi-isometric embedding.
\item The translation lattice $L_H$ is isomorphic to $\R^a \times \Z^b$ for some $a,b \geq 0$ with $a+b \leq n$. In particular, $L_H$ has bounded covolume in its real span $V=\Span_\R(L_H) \cong \R^{a+b}$.
\item Both the real span $V = \Span_\R(L_H) \subseteq \R^n$ and its orthogonal complement $V^\perp \subseteq \R^n$ are $H_0$-invariant subspaces of $\R^n$. 
\end{enumerate}
\end{lemma}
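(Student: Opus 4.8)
The plan is to derive all four assertions from a single structural fact, namely that $H$ is closed in $G$. This holds because a locally compact subgroup of a Hausdorff topological group is automatically closed, and it is the only nontrivial input: once $H$ is closed in $G = T \rtimes \On$, both $H_0 = H \cap \On$ and $T_H = H \cap T$ are intersections of $H$ with closed subgroups, hence themselves closed. Assertion (1) is then immediate, since $H_0$ is a closed subset of the compact group $\On$ and is therefore compact. For assertion (4), I would observe that $T_H$ is a closed subgroup of $T \cong \R^n$ and invoke the classification of closed subgroups of $\R^n$, which gives $L_H \cong \R^a \times \Z^b$ with $a + b = \dim \mathrm{span}_\R(L_H) \leq n$. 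Here the $\Z^b$-factor sits as a full-rank lattice in a complement of the $\R^a$-factor inside $\mathrm{span}_\R(L_H)$, so $L_H$ is cocompact, i.e. has finite covolume, in its real span.

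For assertion (2), I note that $L_H \cong \R^a \times \Z^b$ is compactly generated, for instance by a closed ball in the $\R^a$-factor together with a basis of the $\Z^b$-factor; hence $T_H$ is compactly generated. Since $H = T_H \rtimes H_0$ with $H_0$ compact by (1), the group $H$ is compactly generated as an extension of a compactly generated group by a compact one. For assertion (3), I would argue that because $H / T_H \cong H_0$ is compact, the normal subgroup $T_H$ is cobounded in $H$, so the inclusion $T_H \into H$ is a quasi-isometry for the word metrics. On the other hand, the word metric of $T_H \cong \R^a \times \Z^b$ is bi-Lipschitz to the Euclidean metric restricted to $L_H \subseteq \mathrm{span}_\R(L_H) \subseteq \R^n$, and $\mathrm{span}_\R(L_H) \into \R^n$ is an isometric embedding. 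Composing these quasi-isometries yields that $T_H \into T$ is a quasi-isometric embedding.

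The fact doing the real work is the closedness of $H$ in $G$; I expect this to be the only point requiring genuine care, and I would state it as the opening step with a reference. Everything afterwards reduces to the structure theorem for closed subgroups of $\R^n$ and to the standard coarse-geometry principle that cobounded subgroups are quasi-isometrically embedded (see \cite{Cornulier--delaHarpe}). The one subtlety I would be explicit about in (3) is that the metric $T_H$ inherits from the ambient word metric of $H$ agrees, up to quasi-isometry, with its intrinsic word metric — which is precisely the content of the coboundedness argument, so no separate verification is needed.
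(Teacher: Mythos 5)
Your proposal is correct and follows essentially the same route as the paper: the decisive input in both is that a locally compact subgroup of the Hausdorff group $G$ is closed, after which (1) follows from compactness of $\On$, (4) and (3) from the classification of closed subgroups of $\R^n$ as $\R^a \times \Z^b$ quasi-isometrically embedded in $\R^n$, and (2) from compact generation of $T_H$ plus the extension by the compact group $\sH$. The only cosmetic difference is that the paper deduces compact generation of $T_H$ from a general fact about closed subgroups of $\R^n$ (citing Cornulier--de la Harpe) rather than from the explicit $\R^a \times \Z^b$ structure, which changes nothing.
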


\begin{proof}
Since $H$ is locally compact and $\Isom(\E^n)$ is a Hausdorff space, we see that $H$ is closed; cf. \cite[Chapter~III, {\S}~3, no.~3, Corollary~2]{BourbakiTopology1-4}. Hence, the spherical subgroup $\sH \leq \On$ is also closed, therefore compact as $\On$ is so. 

Because $H$ is closed, its translation part $T_\aH \nsgp H$ is a closed (locally compact) subgroup of the abelian group $T \cong \R^n$. This implies that $T_\aH$ is compactly generated; e.g., by \cite[Proposition~5.A.7]{Cornulier--delaHarpe}. Thus the extension $H = T_\aH \rtimes \sH$ is compactly generated as well; see \cite[Proposition~2.C.8]{Cornulier--delaHarpe}. 

Closed subgroups of $\R^n$ are of the form $\R^a \times \Z^b$ for some integers $a,b \geq 0$, where the $a+b \leq n$ generators are $\R$-linearly independent vectors; see, for example, \cite[Theorem~9.22]{ADGBbook}. (In particular, the real factor $\R^a$ is connected, and the integer part $\Z^b$ is discrete in $\R^{a+b} \subseteq \R^n$.) The closed subgroup $T_\aH \leq T \cong \R^n$ is thus topologically isomorphic to such an $\R^a \times \Z^b \subseteq \R^{a+b} \subseteq \R^n$, with $\R^a \times \Z^b \qi \R^{a+b} \subseteq \R^n$. Part~(4) also follows since $V=\Span_\R(L_H) \cong \R^{a+b}$ and $L_H \cong T_H = \R^a \times \Z^b \qi \R^{a+b} \subseteq \R^n$.

The $\Z$-module $L_H = \{\lambda \in \R^n \mid t^\lambda \in T_\aH\}$ is $H_0$-invariant by definition. Since elements of $H_0$ are linear maps, $V$ is $H_0$-invariant as well. Given arbitrary vectors $v \in V$ and $w \in V^\perp$ and any $h_0 \in H_0$, one has 
$\langle h_0(w), v \rangle = \langle h_0^{-1} h_0(w), h_0^{-1}(v) \rangle = \langle w, h_0^{-1}(v) \rangle = 0$ 
because $H_0 \leq \On$, $h_0^{-1}(v) \in V$, and $w \in V^\perp$. Thus $h_0(w) \in V^\perp$ and part~(5) follows.
\end{proof}

We now introduce the notion of {essential part} of $\sH$, which will dictate the growth rate in \Cref{thm:lineargrowth} in the nontrivial cases.  
 
\begin{definition}
\label{def:essentialH0}
Let $\aH = T_\aH \rtimes \sH$ be a (not necessarily topological) 
split subgroup of $\Isom(\E^n)$ and write $V = \Span_\R(L_H)$. We call  
\(
H_0^{\mathrm{in}} := \{ h_0 \in H_0 \mid h_0|_V = \mathrm{id}_V \}
\) 
the \emph{inessential part} of $\sH$ and will refer to 
\( 
\til\sH = \{ h_0|_V \mid h_0 \in H_0 \}
\)
as the \emph{essential part} of $\sH$. 
\end{definition} 

\begin{lemma} 
\label{lem:essentialHomomorphism}
	With notation as in \Cref{def:essentialH0} above, $H_0^{\mathrm{in}} $ is normal in $\sH$ and there exists a surjective homomorphism $f : H_0 \to \til\sH$ with $\Ker(f) = H_0^{\mathrm{in}}$ and such that 
	\[
	f(h_0) t^\lambda f(h_0)^{-1} = h_0 t^\lambda h_0^{-1} = t^{h_0(\lambda)}
	\] 
	for all $h_0 \in \sH$ and $\lambda \in L_H$. 
	In particular,  $f$ extends to a surjective homomorphism 
	\[
	f : \aH = T_\aH \rtimes \sH \to \til\aH \define \til T_\aH \rtimes \til\sH,
	\] 
	with $\Ker(f) = H_0^{\mathrm{in}}$ and with image $f(H) = \til T_\aH \rtimes \til\sH$ a split subgroup of (the Lie subgroup) $\Isom(V)\leq \Isom(\E^n)$, where 
	$\til T_H$ is the restriction of $T_H$ to $V$ and $f(t^\lambda) = t^\lambda|_V$ for every $t^\lambda \in T_\aH$. Moreover, the translation lattices of $\til\aH$ and $\aH$ agree: $L_{\til\aH} = L_H$.
\end{lemma}

\begin{proof}
This is a direct consequence of part~(5) of \Cref{lem:structureofH}. Indeed, denote by $f : \sH \to \til\sH$ the obvious projection of $\sH$ onto $\til\sH$ by restrictions of all elements to $V$. 
Then $\Ker(f) = H_0^{\mathrm{in}}$ by design. 
%
It is clear that $L_\aH$ is also $\til\sH$-invariant and thus $f(h_0)(\lambda) = h_0(\lambda)$ for all $h_0 \in \sH$ and all $\lambda \in L_\aH$. Thus, taking $\til T_\aH \define \{ t^\lambda|_V \mid t^\lambda \in T_\aH \}$ we may form the semi-direct product $\til H \define \til T_\aH \rtimes \til\sH \leq \Isom(V)$ in the usual way. Note that we may (and do) identify $\Isom(V)$ as a closed subgroup of $\Isom(\E^n)$, 
as each map $g \in \Isom(V)$ extends to all of $\R^n$ by setting $g|_{V^\perp} = \mathrm{id}$. 
It is clear that the two translation lattices are the same since translations from $\til T_{\aH}$ use the exact same vectors as translations from $T_{\aH}$. 
\end{proof}

\subsection{Split Euclidean isometry groups and their transporting  sets}\label{sec:coconjugation-sets}

Let $\aH$ be as in \Cref{sec:mainsetup}. For all $h\in \aH$ write $\hconj$ for its conjugacy class and for all pairs $h,h' \in \aH$ write 
\[
\coconjH{h}{h'} = \{ k \in \aH \mid khk^{-1} = h' \}
\]
for the \emph{{transporting} set}  (from $h$ to $h'$). 
Note that  $\coconjH{h}{h} = C_H(h)$ is the centralizer of $h$ in $\aH$.
We summarize the relevant results from \cite{MST4}. 

Fix $h=t^\lambda h_0\in\aH$ and also $h'=t^{\lambda'} h'_0 \in \hconj$. The {transporting} set $\coconjH{h}{h'}$ is equal to $k \Cent_{\aH}(h)$ for any $k \in \aH$ such that $khk^{-1} = h'$. However, this description is not helpful for our purpose as one would still have to guess a conjugating element and also determine the centralizer. The theorem below provides a description of {transporting} sets (and hence also centralizers) which does not require any guessing. 
Following \cite[Definition~2.1]{MST4} we consider $\Mod_\aH(h_0') = (\Id - h_0') L_\aH$, which is itself contained in $L_\aH$; see \cite[Lemma~3.3]{MST4}. 
We then define the 
\emph{translation-compatible part} of $\coconj_{\sH}(h_0,h_0')$ to be the set 
	\begin{equation}\label{K0_conj}
		\coconj_{\sH}^{\lambda,\lambda'}(h_0,h_0') = \{ u \in \scoconjH{h_0}{h_0'} \mid \lambda' - u\lambda \in \Mod_\aH(h_0') \}. 
	\end{equation}
The {transporting} set is then a disjoint union of subsets parametrized by the translation-compatible part of the {transporting} set of their spherical parts.

\begin{thm}[{Transportation \cite[Theorem~1.13]{MST4}}]
	\label{thm:coconj}
	Let $\aH = T_\aH \rtimes \sH$ be a split group of Euclidean isometries. Let $h=t^{\lambda}h_0$ and $h' = t^{\lambda'}h_0'$ be conjugate elements of $\aH$, where $\lambda, \lambda' \in L_\aH$ and $h_0, h_0' \in \sH$. Then 
	\begin{equation}\label{eq:coconjFix}
		\coconjH{h}{h'} = \bigsqcup_{u \in \coconj_{\sH}^{\lambda,\lambda'}(h_0,h_0')} t^{\eta_{u}+ (\Fix(h_0') \cap L_\aH)} u 
	\end{equation}
	where for each $u$, the element $\eta_{u} \in L_\aH$ is a particular solution to the equation 
	\begin{equation}\label{eq:eta}
		\lambda'- u\lambda=(\Id - h_0')\eta.
	\end{equation}
\end{thm}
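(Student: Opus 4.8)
\section*{Proof proposal}

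The plan is to compute the conjugation action directly inside the semidirect product $\aH = T_\aH \rtimes \sH$ and to read off, componentwise, exactly which elements conjugate $h$ to $h'$. Write an arbitrary $k \in \aH$ uniquely as $k = t^{\mu} u$ with $\mu \in L_\aH$ and $u \in \sH$. Recalling that $t^{\mu}u$ acts on $\E^n$ by $x \mapsto u x + \mu$, a direct computation gives
\begin{equation*}
k\, h\, k^{-1} = t^{(\Id - u h_0 u^{-1})\mu + u\lambda}\, \bigl(u h_0 u^{-1}\bigr),
\end{equation*}
so that the spherical part of $khk^{-1}$ is $u h_0 u^{-1}$ and its translation vector is $(\Id - u h_0 u^{-1})\mu + u\lambda$.

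First I would impose $khk^{-1} = h' = t^{\lambda'}h_0'$ and separate the two components. The spherical equation $u h_0 u^{-1} = h_0'$ says precisely that $u \in \scoconjH{h_0}{h_0'}$. Substituting $u h_0 u^{-1} = h_0'$ into the translation component then turns the translation equation into the inhomogeneous linear system
\begin{equation*}
(\Id - h_0')\,\mu = \lambda' - u\lambda,
\end{equation*}
to be solved for $\mu$ inside the lattice $L_\aH$. Thus $k = t^\mu u$ conjugates $h$ to $h'$ if and only if $u$ lies in $\scoconjH{h_0}{h_0'}$ and $\mu \in L_\aH$ solves this system.

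Next I would analyse solvability over $L_\aH$. The system admits a solution $\mu \in L_\aH$ exactly when $\lambda' - u\lambda \in (\Id - h_0')L_\aH$; since negation is a bijection of $L_\aH$, this image coincides with $\Mod_\aH(h_0') = (h_0' - \Id)L_\aH$, so the condition is precisely $u \in \coconj_{\sH}^{\lambda,\lambda'}(h_0,h_0')$ as in \eqref{K0_conj}, and for such $u$ we may fix one particular solution $\eta_u \in L_\aH$. The full solution set is then $\eta_u + \bigl(\ker(\Id - h_0') \cap L_\aH\bigr)$, and since $\ker(\Id - h_0') = \Fix(h_0')$, this is $\eta_u + (\Fix(h_0') \cap L_\aH)$. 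Ranging $u$ over the translation-compatible part and using that the decomposition $k = t^\mu u$ is unique (so distinct $u$ contribute disjoint families) yields the claimed union \eqref{eq:coconjFix}.

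The computation itself is routine once the group law is in place; the step that needs genuine care is the bookkeeping over the lattice $L_\aH \cong \R^a \times \Z^b$ rather than over the ambient $\R^n$. Because $L_\aH$ need not be full-rank nor all of $\R^n$, one must read both the image $(\Id - h_0')L_\aH$ and the homogeneous part $\Fix(h_0') \cap L_\aH$ inside $L_\aH$; the existence of a lattice-valued particular solution $\eta_u$ is exactly what the translation-compatibility constraint encodes, and it is this integrality aspect --- not the underlying linear algebra --- that I expect to be the main obstacle.
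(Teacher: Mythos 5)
Your proposal is correct. Note that the paper itself gives no proof of this statement --- it is quoted verbatim from \cite[Theorem~1.13]{MST4} --- so there is nothing internal to compare against; your direct computation in the semidirect product is the natural self-contained argument. The key steps all check out: the conjugation formula $khk^{-1} = t^{(\Id - uh_0u^{-1})\mu + u\lambda}\,(uh_0u^{-1})$ for $k = t^\mu u$ is right; equating spherical parts forces $u \in \scoconjH{h_0}{h_0'}$; and you correctly handle the two points where care is needed, namely that $(\Id - h_0')L_\aH = (h_0'-\Id)L_\aH$ because $L_\aH$ is closed under negation (so solvability over the lattice is exactly membership in $\Mod_\aH(h_0')$, i.e.\ $u$ lies in the translation-compatible part), and that the full solution set over $L_\aH$ is $\eta_u + (\Fix(h_0')\cap L_\aH)$ rather than a coset of the full kernel $\Fix(h_0')$. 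Disjointness from uniqueness of the decomposition $k = t^\mu u$ completes the identity \eqref{eq:coconjFix}.
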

\noindent Geometrically, \Cref{eq:coconjFix} means that the transporting set $\coconjH{h}{h'}$ lies along translates of the fix-set $\Fix(h_0')=\Ker(\Id - h_0')$, which is orthogonal to $\Mov(h_0')\define (\Id-h_0')\R^n$. 

We refer the reader to \cite{MST5} for a detailed study of the surprisingly subtle structure of $\Mod_\aH(h_0')$ in the case of Coxeter groups. 

\subsection{The Moore--Penrose pseudo-inverse}
\label{sec:pseudoinverse}

In order to prove \Cref{thm:lineargrowth} we will have to find minimum-norm solutions to systems of equations, so let us recall some key concepts.

Let $Ax=b$ be a system of linear equations over $\R$. That is, for some $m,n \in\N$ the letter $A$ denotes a real $m\times n$ matrix, $b\in V=\R^m$, and $x=(x_1,\ldots, x_n)^{\mathtt{T}}$ a list of variables. 
In case $A$ is invertible the unique solution is given by $A^{-1}b$. 
To find a minimum-norm solution to the system $Ax=b$ in all other cases (with $A$ not invertible), we need to compute the distance between the space $\LAb$ and the origin. This is achieved using a notion of a pseudoinverse. There are many (equivalent) ways to define it. We chose to follow Definition~1.1.3 in \cite{CampbellMeyer}. 

\begin{definition}[{Moore--Penrose pseudoinverse}]
\label{def:MoorePenrose}
	Let $A$ be an $m\times n$ real or complex matrix. The \emph{pseudoinverse} $A^+$ of $A$ is the unique $n\times m$ matrix such that all of the following hold:
	\begin{enumerate}
		\item $AA^+A=A$, that is, $A^+$ maps all column vectors of $A$ onto themselves; 
		\item $A^+AA^+=A^+$, that is, $A^+$ has a weak inverse property; 
		\item $AA^+=(AA^+)^\star$, that is, $AA^+$ is Hermitian; and 
		\item $A^+A=(A^+A)^\star$, that is, $A^+A$ is also Hermitian.   
	\end{enumerate} 
\end{definition}

The pseudoinverse exists for any matrix and agrees with the usual inverse in case $n=m$ and $A$ is of full rank. It is easy to check that the image of $A^+$ is equal to the image of the conjugate transpose $A^\star$; see \cite[p.~9]{CampbellMeyer}. The next statement shows how pseudoinverses yield optimal solutions to linear systems. For a proof, see \cite[Theorem 2.1.1]{CampbellMeyer}.

\begin{prop}
\label{prop:pseudoinverse}
Let $A\in\R^{m\times n}$, $b\in\R^m$. Then $x_0\define A^+ b \in \R^n$ is the minimal least squares solution to the system of equations $Ax=b$. That is, $\| Ax_0-b \| \leq \| Ax- b\| $ for all $x\in \R^n$ and $x_0$ has smallest possible norm among all vectors satisfying this inequality.
\end{prop}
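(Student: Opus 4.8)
The plan is to decompose the statement into its two assertions, mirroring the wording: first that $x_0 = A^+ b$ minimizes the residual $\|Ax - b\|$, and second that $x_0$ has the smallest norm among all such minimizers. Both follow cleanly once one establishes two geometric facts: that $AA^+$ is the orthogonal projection of $\R^m$ onto the column space $\Range(A)$, and that $A^+A$ is the orthogonal projection of $\R^n$ onto $\Ker(A)^\perp$. I would derive these directly from the four defining identities in \Cref{def:MoorePenrose}, reading the Hermitian conditions as symmetry since all matrices here are real.

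For the first fact, note that $AA^+$ is idempotent — indeed $(AA^+)(AA^+) = A(A^+AA^+) = AA^+$ by identity (2) — and symmetric by identity (3); a symmetric idempotent is precisely an orthogonal projection. Its image equals $\Range(A)$: the inclusion $\Range(AA^+) \subseteq \Range(A)$ is clear, and identity (1) gives $AA^+(Ax) = Ax$, so every vector of $\Range(A)$ is fixed. Consequently $Ax_0 = AA^+ b$ is the orthogonal projection of $b$ onto $\Range(A)$, which by the projection theorem is the point of $\Range(A)$ closest to $b$. Since every $Ax$ lies in $\Range(A)$, this yields $\|Ax_0 - b\| \leq \|Ax - b\|$ for all $x \in \R^n$, proving the least-squares claim.

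For the second fact, an analogous argument — now with idempotency from identity (1), via $(A^+A)(A^+A) = A^+(AA^+A) = A^+A$, and symmetry from identity (4) — shows $A^+A$ is a symmetric idempotent, hence an orthogonal projection. Its kernel is exactly $\Ker(A)$: if $A^+Ax = 0$ then $AA^+Ax = Ax = 0$ by (1), and the reverse inclusion is trivial; so $A^+A$ projects onto $\Ker(A)^\perp$. The minimizers of the residual are precisely the solutions of $Ax = Ax_0$, that is, the affine set $x_0 + \Ker(A)$, so it remains to see that $x_0$ is the shortest vector therein. Identity (2) gives $A^+Ax_0 = A^+AA^+b = A^+b = x_0$, so $x_0$ is fixed by the projection onto $\Ker(A)^\perp$, whence $x_0 \in \Ker(A)^\perp$. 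Then for every $z \in \Ker(A)$ the Pythagorean identity $\|x_0 + z\|^2 = \|x_0\|^2 + \|z\|^2$ forces $\|x_0 + z\| \geq \|x_0\|$, with equality only at $z = 0$, which is exactly the minimal-norm assertion.

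The computations are all short; the only real content — and the step I would flag as the crux — is recognizing that the four Penrose axioms are engineered precisely so that $AA^+$ and $A^+A$ become the orthogonal projections onto $\Range(A)$ and $\Ker(A)^\perp$. Once this reinterpretation is in place, the least-squares property is just the projection theorem and the minimal-norm property is a single application of Pythagoras, with no further estimates required.
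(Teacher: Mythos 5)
Your proof is correct in every step: idempotency of $AA^+$ and $A^+A$ from identities (1)--(2), symmetry from (3)--(4), the identification $\Range(AA^+)=\Range(A)$ and $\Ker(A^+A)=\Ker(A)$, the description of the set of residual minimizers as $x_0+\Ker(A)$, and the membership $x_0\in\Ker(A)^\perp$ via $A^+Ax_0=x_0$, followed by Pythagoras. The only point of comparison to flag is that the paper does not prove this proposition at all --- it quotes it from the literature, pointing to \cite[Theorem~2.1.1]{CampbellMeyer} --- and the argument you give (reading the four Penrose axioms as saying that $AA^+$ and $A^+A$ are the orthogonal projections onto $\Range(A)$ and $\Ker(A)^\perp$, then invoking the projection theorem) is precisely the standard route taken in such references. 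So your write-up is not a different method but a self-contained verification of what the paper outsources; its one implicit dependence is the existence and uniqueness of $A^+$, which the paper likewise takes from \Cref{def:MoorePenrose} without proof.
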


There are many methods to compute pseudoinverses in general, see e.g. \cite[Cap\'itulo~16]{ElonLinA} or \cite[Theorem~1.3.1]{CampbellMeyer}. 
We shall need the following special case: 

\begin{prop}[{\cite[Theorem~1.2.1(P7)]{CampbellMeyer}}]\label{prop:pseudoInverseViaSingularValue}
	Let $M$ be a matrix with singular value decomposition $M=U\Sigma V^\star$ with $\Sigma=(\sigma_{i,j})_{i,j}$ the diagonal matrix of all singular values $\sigma_i=\sigma_{i,i}$ of $M$ and $U$, $V$ unitary. Then the pseudo-inverse $M^+$ of $M$ is given by $M^+=V\Sigma^+U^\star$, where the pseudo-inverse  $\Sigma^+=(\hat\sigma_{i,j})_{i,j}$ is the diagonal matrix obtained by putting $\hat\sigma_{i,j} =\sigma_{j,i}$ for all $i\neq j$ and $\hat\sigma_{i,i}=\frac{1}{\sigma_{i,i}}$ for all nonzero $\sigma_{i,i}$ while keeping $\hat\sigma_{i,i}=0$ otherwise. 
\end{prop}

\section{Estimates for conjugator length and trivial growth}

Let $\aH = T_\aH \rtimes \sH \leq \Isom(\E^n)= T \rtimes \On$ be as before. By \Cref{lem:structureofH} we may fix once and for all a compact generating subset $S \subseteq H$, with which we consider the word length $\ell_S$ on $H$.  
We need to estimate the shortest $S$-length of elements in $\coconjH{h}{h'}$  for pairs of conjugated elements $h,h'$ whose lengths add to at most $m \in \N$. 
We start by relating the $S$-length to the norm (i.e. Euclidean length) of its translation part. 

\begin{lemma} \label{lem:firstapprox}
Let $\aH = T_H \rtimes \sH$ be compactly generated by $S$. Then 
there exist strictly positive constants $A,B,C,D$ depending on $\aH$ and $S$, such that 
\[A\cdot \ell_S(t^\lambda w) - B \leq \| \lambda \| \leq C\cdot \ell_S(t^\lambda w) + D\]
for all $t^\lambda \in T_H$ and all $w \in \sH$, where $\| \lambda \|$ denotes the Euclidean norm of $\lambda \in \R^n$.
\end{lemma}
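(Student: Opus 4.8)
The plan is to establish the two inequalities separately, using the compactness of $\sH$ to absorb the spherical factor $w$ into additive constants. Since $\sH$ is compact by \Cref{lem:structureofH}(1) and the word metric $d_S$ attached to a compact generating set is proper (indeed geodesically adapted, see \cite[Chapter~4B]{Cornulier--delaHarpe}), every compact subset of $H$ is $d_S$-bounded; in particular there is a constant $M \geq 0$ with $\ell_S(w) \leq M$ for all $w \in \sH$. As $t^\lambda w$ and $t^\lambda$ differ by the single factor $w$, the triangle inequality gives $|\ell_S(t^\lambda w) - \ell_S(t^\lambda)| \leq M$; this lets me pass freely between $\ell_S(t^\lambda w)$ and $\ell_S(t^\lambda)$ at the cost of additive constants, so I may compare $\|\lambda\|$ with $\ell_S(t^\lambda)$ and fold $M$ into $B$ and $D$ at the end.

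For the upper bound $\|\lambda\| \leq C\,\ell_S(t^\lambda w) + D$ I would argue directly on a geodesic word. Writing $s = t^{\mu_s} A_s$ for the translation and orthogonal parts of $s \in S$, compactness of $S$ makes $R \define \sup_{s \in S} \|\mu_s\|$ finite. If $t^\lambda w = s_1 \cdots s_k$ realises $k = \ell_S(t^\lambda w)$, then multiplying out in $T \rtimes \On$ shows the translation part of the product equals $\sum_{i=1}^{k} (A_{s_1}\cdots A_{s_{i-1}})\mu_{s_i}$, which must coincide with $\lambda$ since $w \in \On$ contributes nothing to the translation component. Each summand has norm $\|\mu_{s_i}\| \leq R$ because orthogonal maps preserve the Euclidean norm, whence $\|\lambda\| \leq kR = R\,\ell_S(t^\lambda w)$. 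This yields the right-hand inequality with $C = R$ and $D = 0$, and uses nothing beyond the semidirect-product structure.

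The left-hand inequality is the substantive one: I must bound the ambient word length $\ell_S(t^\lambda)$ by a linear function of $\|\lambda\|$. Here I would invoke the structure of the translation part. By \Cref{lem:structureofH}(4) I may fix $\R$-linearly independent vectors so that each $\lambda \in L_H$ is written as $\lambda = \sum_{i=1}^{a} c_i v_i + \sum_{j=1}^{b} m_j v'_j$ with $c_i \in \R$ and $m_j \in \Z$; linear independence makes the coefficients a fixed linear image of $\lambda$, so $\sum_i |c_i| + \sum_j |m_j| \leq \gamma \|\lambda\|$ for some $\gamma > 0$. The discrete generators $t^{v'_j}$ and the compact segments $\{t^{s v_i} : |s| \leq 1\}$ are fixed compact subsets of $H$, hence of bounded $\ell_S$-length by properness again; splitting each $t^{c_i v_i}$ into at most $|c_i| + 1$ such unit steps and each $t^{m_j v'_j}$ into $|m_j|$ copies of $t^{\pm v'_j}$ bounds $\ell_S(t^\lambda)$ linearly in $\sum_i |c_i| + \sum_j |m_j|$, and therefore linearly in $\|\lambda\|$. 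Rearranging and folding in the constant $M$ produces $A > 0$ and $B \geq 0$ with $A\,\ell_S(t^\lambda w) - B \leq \|\lambda\|$. Equivalently, one can phrase this last step as the quasi-isometric embedding of \Cref{lem:structureofH}(3) combined with the fact that the finitely many generators of $T_H$ have bounded $\ell_S$-length. The main obstacle to watch is precisely this mixing of a continuous factor $\R^a$ with a discrete factor $\Z^b$: one must produce short ambient words both for real translations along the connected directions and for integer multiples along the discrete ones, which is why properness of $d_S$ --- guaranteeing that the chosen compact pieces have uniformly bounded length --- is doing the real work.
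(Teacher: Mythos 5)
Your proof is correct, and it takes a more elementary route than the paper's. The paper obtains both inequalities by composing two quasi-isometries: since $\sH$ is compact, the inclusion $T_\aH \into \aH$ is a quasi-isometry by \cite[Proposition~4.C.11]{Cornulier--delaHarpe}, which trades $\ell_S(t^\lambda w)$ for $\ell_{S_0}(t^\lambda)$ with respect to a compact generating set $S_0$ of $T_\aH$; then the orbit map $t^\lambda \mapsto \lambda$ is a quasi-isometric embedding by the \v{S}varc--Milnor lemma together with \Cref{lem:structureofH}(3), and chaining the two sets of constants finishes. You instead absorb the spherical factor $w$ directly via the bounded $\ell_S$-length of the compact subgroup $\sH$, prove the upper bound by expanding a geodesic word in the semidirect product (the translation part of $s_1\cdots s_k$ is a sum of $k$ rotated generator translations, each of norm at most $R$, since orthogonal maps preserve norm), and prove the lower bound by explicitly writing $t^\lambda$ as a product of boundedly many ``unit steps'' coming from the $\R^a \times \Z^b$ structure of \Cref{lem:structureofH}(4) --- in effect reproving the quasi-isometric embedding of \Cref{lem:structureofH}(3) by hand. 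Your route buys self-containedness (no \v{S}varc--Milnor, no citation for the quasi-isometry $T_\aH \qi \aH$) and explicit constants, e.g.\ $C = R$ and $D = 0$; the paper's route buys brevity and makes transparent that the lemma is nothing but a composition of quasi-isometries. Two small caveats, neither a gap: the geodesic word should be taken over the symmetrized set $S \cup S^{-1}$, which is still compact, so $R$ remains finite; and the fact you actually use --- that compact subsets of $H$ have bounded $\ell_S$-length --- is the local boundedness of an adapted metric (left-invariance plus boundedness on compact sets, \cite[Chapter~4B]{Cornulier--delaHarpe}) rather than properness in the sense of relatively compact balls, but since word metrics attached to compact generating sets enjoy both properties, your argument goes through verbatim.
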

\begin{proof}
By \Cref{lem:structureofH}, $\sH$ is compact, hence the inclusion map $T_\aH \into H = T_\aH \rtimes \sH$ is a quasi-isometry; see, for instance, \cite[Proposition~4.C.11]{Cornulier--delaHarpe}. We can thus find a compact generating set $S_0$ of $T_\aH$ and obtain constants 
$\quer{A},\quer{D} > 0$ and $\quer{B} , \quer{C} \geq 0$ such that
\begin{equation} \label{eq:firstQI}
\quer{A}\cdot \ell_S(t^\lambda w) - \quer{B} \leq \ell_{S_0}(t^\lambda) \leq \quer{C}\cdot \ell_S(t^\lambda w) + \quer{D} \quad \quad \forall t^\lambda \in T_H, w\in \sH.
\end{equation}
Choosing the origin $\mbf{0} \in \R^n$ as base point, the map 
given by \(t^\lambda \mapsto \mbf{0} + \lambda = \lambda\)
is a quasi-isometric embedding \(T_\aH \to \R^m \subseteq \R^{n}\) by the \v{S}varc--Milnor lemma; see \cite[Theorem~4.C.5]{Cornulier--delaHarpe}.  
By \Cref{lem:structureofH}(3), we have $T \qi \R^m$ for some $m \leq n$. Hence there are further constants 
$\til{A},\til{C} > 0$ and $\til{B}, \til{D} \geq 0$ such that 
\begin{equation} \label{eq:secondQI}
\til{A}\cdot \ell_{S_0}(t^\lambda) - \til{B} \leq \| \lambda \| \leq \til{C}\cdot \ell_{S_0}(t^\lambda) + \til{D} \quad \quad \forall t^\lambda \in T_H.
\end{equation}
Combining \Cref{eq:firstQI,eq:secondQI} we construct constants as claimed.
\end{proof}

\Cref{lem:firstapprox} motivates the following definition with several immediate corollaries. 

\begin{definition}
\label{def:ctd}
The \emph{conjugator translation norm} $\ctn(h,h')$ of a pair of conjugate elements $h,h'\in\aH$ is given by 
\[
\ctn(h,h')\define \min \{ \| \lambda \| \, : \, k=t^\lambda w\in \aH \text{ is such that } khk^{-1}=h'\}. 
\]  	
\end{definition} 

Note that the groups $H$ we consider here are all locally compact and compactly generated groups, hence the minimum is always attained and may be zero. 

\begin{corollary} \label{cor:secondapprox}
In the notation of \Cref{lem:firstapprox} we have 
\[ A \cdot \cl(h,h') - B \leq \ctn(h,h') \leq C \cdot \cl(h,h') + D.\]
\end{corollary}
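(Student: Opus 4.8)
The plan is to deduce both inequalities directly from the pointwise two-sided estimate in \Cref{lem:firstapprox}, the only subtlety being that the element realizing $\cl(h,h')$ need not be the same as the one realizing $\ctn(h,h')$. The strategy is therefore to pick, for each of the two directions, the minimizer adapted to that side of the inequality, and to pair it with the matching half of \Cref{lem:firstapprox}.

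For the upper bound $\ctn(h,h') \leq C \cdot \cl(h,h') + D$, I would let $k_0 = t^{\lambda_0} w_0 \in \coconjH{h}{h'}$ be an element attaining $\ell_S(k_0) = \cl(h,h')$. Applying the right-hand inequality of \Cref{lem:firstapprox} to $k_0$ yields $\| \lambda_0 \| \leq C \cdot \ell_S(k_0) + D = C \cdot \cl(h,h') + D$. Since $k_0$ is itself a conjugator, its translation norm $\| \lambda_0 \|$ is one of the quantities over which the minimum defining $\ctn(h,h')$ is taken, whence $\ctn(h,h') \leq \| \lambda_0 \|$ and the claimed bound follows.

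For the lower bound $A \cdot \cl(h,h') - B \leq \ctn(h,h')$, I would instead let $k_1 = t^{\lambda_1} w_1 \in \coconjH{h}{h'}$ realize $\ctn(h,h')$, so that $\| \lambda_1 \| = \ctn(h,h')$. The left-hand inequality of \Cref{lem:firstapprox} then gives $A \cdot \ell_S(k_1) - B \leq \| \lambda_1 \| = \ctn(h,h')$. Because $\cl(h,h')$ is by definition the minimum of $\ell_S$ over all conjugators, we have $\cl(h,h') \leq \ell_S(k_1)$, and since $A > 0$ this yields $A \cdot \cl(h,h') - B \leq A \cdot \ell_S(k_1) - B \leq \ctn(h,h')$.

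There is essentially no obstacle: the entire content sits in \Cref{lem:firstapprox}, and the corollary is a bookkeeping step translating between the word-length minimum defining $\cl$ and the translation-norm minimum defining $\ctn$. The single point worth tracking is that $\cl$ and $\ctn$ may be attained by different elements of $\coconjH{h}{h'}$, which is precisely why each direction is argued with its own minimizer combined with the appropriate inequality of the lemma, and why positivity of $A$ (and of $C$) is invoked to preserve the inequalities under scaling.
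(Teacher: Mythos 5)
Your proof is correct and takes essentially the same approach as the paper, which simply declares the corollary ``immediate'' from \Cref{lem:firstapprox}. Your write-up just makes explicit the routine step the paper leaves implicit: pairing each side of the two-sided estimate with the minimizer in $\coconjH{h}{h'}$ adapted to that side.
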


\begin{corollary} \label{cor:thirdapprox}
Suppose $h = t^\lambda h_0$, $h' = t^{\lambda'} h_0' \in H$ are conjugate. Then there exists an increasing affine map $m_{\aH,S} : \Z_{\geq 0} \to \Z_{\geq 0}$ such that 
\[\ell_S(h) + \ell_S(h') \leq k \iff \|\lambda\| + \|\lambda'\| \leq m_{\aH,S}(k).\]
\end{corollary}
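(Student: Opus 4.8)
The plan is to derive the statement directly from \Cref{lem:firstapprox}, applied separately to $h$ and to $h'$; the conjugacy hypothesis plays no role here and is carried along only for context. Writing $h = t^\lambda h_0$ with $h_0 \in \sH$, I would instantiate the lemma with $w = h_0$, noting that $\ell_S(t^\lambda h_0) = \ell_S(h)$, to obtain
\[ A\,\ell_S(h) - B \;\leq\; \|\lambda\| \;\leq\; C\,\ell_S(h) + D. \]
The same instantiation with $h' = t^{\lambda'} h_0'$ and $w = h_0' \in \sH$ gives the analogous two-sided estimate for $\|\lambda'\|$.

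Summing the two chains of inequalities yields
\[ A\bigl(\ell_S(h) + \ell_S(h')\bigr) - 2B \;\leq\; \|\lambda\| + \|\lambda'\| \;\leq\; C\bigl(\ell_S(h) + \ell_S(h')\bigr) + 2D. \]
From the upper bound, $\ell_S(h) + \ell_S(h') \leq n$ forces $\|\lambda\| + \|\lambda'\| \leq Cn + 2D$; from the lower bound, $\|\lambda\| + \|\lambda'\| \leq n$ forces $\ell_S(h)+\ell_S(h') \leq (n + 2B)/A$. To package both directions into one increasing affine self-map of $\Z_{\geq 0}$ with integer coefficients, in the sense fixed after \Cref{def:conjugatorlengthfunction}, I would set $m_{\aH,S}(n) = C' n + D'$ with $C' \in \N$ and $D' \in \Z_{\geq 0}$ chosen so that $C' \geq \max\{C, 1/A\}$ and $D' \geq \max\{2D, 2B/A\}$. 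Then the single map $m_{\aH,S}$ witnesses both implications at once, which is precisely the asserted equivalence in the coarse (quasi-isometric) sense relevant downstream.

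There is no genuine mathematical obstacle here; the only points requiring attention are bookkeeping. The first is matching $h = t^\lambda h_0$ to the form $t^\lambda w$ of \Cref{lem:firstapprox} by reading off its spherical part $h_0 \in \sH$ as the free parameter $w$. The second, and the closest thing to a subtlety, is that the two quasi-isometry constants $A$ and $C$ generally differ, so a single affine map must be taken large enough to dominate both one-sided estimates, and then rounded up to integer slope and intercept to respect the standing convention on affine maps on $\Z_{\geq 0}$. No estimate beyond \Cref{lem:firstapprox} is needed; as with its companion \Cref{cor:secondapprox}, the corollary is an immediate consequence of that lemma.
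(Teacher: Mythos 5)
Your proof is correct and matches the paper's: the paper derives this corollary (together with \Cref{cor:secondapprox}) as an immediate consequence of \Cref{lem:firstapprox}, exactly as you do --- apply the lemma to $h$ and $h'$ separately with $w = h_0$, $w = h_0'$, sum the two-sided bounds, and absorb all constants into a single increasing affine self-map of $\Z_{\geq 0}$. Your remark that the stated ``iff'' should be read coarsely (one affine map witnessing both one-sided implications after reparametrization, rather than a literal equivalence for fixed $n$) is the correct reading, and it is precisely the form in which the corollary is used to establish \Cref{prop:reduce-to-translations}.
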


In analogy to the conjugator length we define a function maximizing the value of $\ctn$. 

\begin{definition}
	\label{def:CTD}
	
	The \emph{conjugator translation norm function} $\Tnorm:\N\to\Z_{\geq 0}$ of $H$ is defined by 
	\[
		\Tnorm(m) \define \sup\{ \ctn(h,h')  :  h=t^\lambda w\sim h'=t^{\lambda'}w' \, \text{ and } \, \| \lambda\| +\| \lambda'\|\leq m\}.
	\]
	Similarly to $\CLF$, we may view $\Tnorm$ as a nondecreasing map on $\R_{\geq 0}$ by setting $\Tnorm(r) = \Tnorm(\lceil r \rceil)$ for all $r \in \R_{\geq 0}$, and we study its growth rate as usual.
\end{definition}

The next lemma is immediate from \Cref{cor:secondapprox,cor:thirdapprox}.

\begin{lemma}
	\label{prop:reduce-to-translations}
	For the groups $H$ as before, the function $\Tnorm$ has the same growth rate as $\CLF$.  
\end{lemma}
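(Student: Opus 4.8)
The goal is to show $\Tnorm \qeq \CLF$, i.e.\ each quasi-dominates the other. My plan is to derive both inequalities directly from the comparison between $\cl$ and $\ctn$ supplied by \Cref{cor:secondapprox}, together with the fact that the summation constraints defining the two maximization functions ($\ell_S(h)+\ell_S(h')\leq n$ versus $\|\lambda\|+\|\lambda'\|\leq n$) are interchangeable up to an affine reparametrization, which is precisely the content of \Cref{cor:thirdapprox}. The key observation is that both $\CLF$ and $\Tnorm$ are maxima of pointwise-comparable quantities over index sets that match up after composing with an affine map, so the affine distortions simply stack.

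\medskip

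Concretely, I would argue as follows. \textbf{Step 1 ($\CLF \qdom \Tnorm$).} Fix $n$ and choose a conjugate pair $h=t^\lambda w \sim h'=t^{\lambda'}w'$ realizing $\CLF(n)$, so that $\ell_S(h)+\ell_S(h')\leq n$ and $\cl(h,h')=\CLF(n)$. By the left inequality in \Cref{cor:secondapprox} we have $A\cdot\CLF(n) - B \leq \ctn(h,h')$, hence $\CLF(n) \leq \tfrac{1}{A}\big(\ctn(h,h')+B\big)$. By \Cref{cor:thirdapprox} the constraint $\ell_S(h)+\ell_S(h')\leq n$ forces $\|\lambda\|+\|\lambda'\|\leq m_{\aH,S}(n)$, so this same pair is admissible in the maximization defining $\Tnorm\big(m_{\aH,S}(n)\big)$, whence $\ctn(h,h')\leq \Tnorm\big(m_{\aH,S}(n)\big)$. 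Combining, $\CLF(n) \leq \tfrac{1}{A}\Tnorm\big(m_{\aH,S}(n)\big)+\tfrac{B}{A}$, which exhibits the required affine bound $\CLF \leq a\circ \Tnorm\circ a$ after absorbing $m_{\aH,S}$ and the outer affine adjustments into a single increasing affine $a$ (rounding the multiplicative constant up to an integer and the additive ones appropriately so that the codomain stays in $\Z_{\geq 0}$). \textbf{Step 2 ($\Tnorm \qdom \CLF$).} Symmetrically, pick a pair realizing $\Tnorm(n)$ with $\|\lambda\|+\|\lambda'\|\leq n$; use the right inequality of \Cref{cor:secondapprox}, $\ctn(h,h')\leq C\cdot\cl(h,h')+D$, and the reverse direction of \Cref{cor:thirdapprox} (which gives an affine $m'$ with $\|\lambda\|+\|\lambda'\|\leq n \Rightarrow \ell_S(h)+\ell_S(h')\leq m'(n)$, obtained by inverting the affine equivalence of \Cref{cor:thirdapprox}) to conclude $\Tnorm(n)\leq C\cdot\CLF\big(m'(n)\big)+D$, again an affine sandwich.

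\medskip

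The main subtlety I anticipate is bookkeeping rather than mathematical depth: one must be careful that all the constants $A,B,C,D$ are genuinely uniform, i.e.\ independent of the particular pair $h,h'$ (they are, since \Cref{lem:firstapprox} produces them from $\aH$ and $S$ alone), and that the various affine functions compose into a single increasing affine $a:\Z_{\geq 0}\to\Z_{\geq 0}$ of the prescribed integer form $Cx+D$ required by the definition of $\qdom$. The one place demanding genuine care is extracting a clean affine \emph{biconditional} reparametrization of the constraints from \Cref{cor:thirdapprox}: that corollary gives a single increasing affine $m_{\aH,S}$ with an ``iff,'' so its inverse relation is also affine up to the usual rounding, and I would invoke it in both directions while noting that $\qeq$ is insensitive to these finitely many affine corrections. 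Since monotonicity and the affine-composition-closure built into the definition of $\qdom$ do all the heavy lifting, no estimate here is delicate; the argument is essentially a two-line application of \Cref{cor:secondapprox,cor:thirdapprox} in each direction.
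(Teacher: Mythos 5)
Your proposal is correct and takes exactly the route the paper intends: the paper's own proof is the one-liner ``Immediate from \Cref{cor:secondapprox,cor:thirdapprox}'', and your two steps simply carry out that bookkeeping, including the right caution that \Cref{cor:thirdapprox} must be read as giving affine reparametrizations in each direction (rather than a literal single-$m$ biconditional) and that the constants from \Cref{lem:firstapprox} are uniform in $h,h'$. Nothing in your argument deviates from or adds to the paper's approach beyond making the absorbed affine compositions explicit.
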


The following technical lemma will be used in the proof of \cref{prop:Tnorm-lower-bound,prop:Tnorm-growth-nonisolated}.

\begin{lemma}\label{lem:conjugateElements}
Suppose $w\in\sH$ is such that $\{0\} \neq \Mod_\aH(w) = (\Id-w)L_\aH$. Then for all $\lambda' \in \Mod_\aH(w)  \subseteq L_\aH$ and all $z \in \Z$ the elements $w$ and $t^{z\lambda'} w$ are conjugate.  
\end{lemma}
\begin{proof}
Observe that if $\lambda'=0$ or $z=0$, then $t^{z\lambda'} w=w$ and there is nothing to prove. 
As $L_\aH$ is a $\Z$-module and $\Id-w$ a linear map, we have $\Span_{\Z}(\lambda') \subseteq \Mod(w)$ so that $z\lambda' \in \Mod(w) \setminus \{0\}$ for all nontrivial $\lambda'\in\Mod(w)$ and $z \in \Z \setminus \{0\}$. 

Since $\lambda' \in \Mod(w) \setminus \{0\}$ there exists, by \Cref{thm:coconj}, some $\eta \in L_\aH \setminus \{0\}$ for which $z\lambda' = (\Id-w)z\eta$. Thus, choosing any $u \in C_{\sH}(w)$, the element $k_z = t^{z\eta} u \in H$ satisfies
\begin{align*}
	k_z w k_z^{-1} = t^{z\eta} u w u^{-1} t^{-z\eta} = t^{z\eta} w t^{-z\eta} w^{-1} w = t^{(\Id-w)z\eta} w = t^{z\lambda'} w.
\end{align*}
Hence the claim. 
\end{proof}

We now determine the growth type of $\Tnorm$, starting with a lower bound in the non-commuting situation. \Cref{cor:thecaseofzerogrowth} below is a direct consequence of \Cref{prop:Tnorm-lower-bound}.  

\begin{prop} \label{prop:Tnorm-lower-bound}
Let $\aH$ be a locally compact split subgroup of $\Isom(\E^n)$. If $L_\aH \subseteq \Fix(\sH)$, then $\Tnorm$ has trivial growth for $\aH$. Otherwise, $\Tnorm$ grows at least linearly.
\end{prop}

\begin{proof}
In case $L_\aH \subseteq \Fix(\sH) = \{v \in \R^n \mid h_0(v) = v \, \, \forall h_0 \in \sH\}$, all elements of $\sH$ commute in $H$ with all elements from $T_\aH$. That is, $H$ actually splits as a direct product $H = T_\aH \times \sH$ of the abelian group $T_\aH$ with the compact group $\sH$. \Cref{def:conjugatorlengthfunction} and \Cref{ex:compactCLFzero} imply that $\Tnorm \qeq \CLF \qeq \mathbf{0}$ in this case.

Now suppose otherwise that there exists $w \in \sH\setminus\{\Id\}$ for which $L_\aH \not\subseteq \Fix(w)$. 
We aim to check that $\Tnorm$ grows at least linearly, which amounts to finding an increasing affine function that is quasi-dominated by $\Tnorm$. 
By assumption we can find some $\lambda' \in \Mod(w) \setminus \{0\}$. 
Note that $\| \lambda'\| > 0$ and that $\| z \lambda' \| = |z| \cdot \|\lambda'\|$ for all $z \in \Z$. 
As seen in \Cref{lem:conjugateElements}, for every $z \in \Z$ the elements $h_z = w$ and $h_z' = t^{z\lambda'} w$ can be conjugated to one another. 
By definition of $\Tnorm$ and using the conjugated pairs $h_z, h_z'$, 
\begin{equation} \label{eq:linear}
\Tnorm(m) \geq \ctn(h_z,h_z') = |z| \ctn(w, \, t^{\lambda'}w) \quad \text{ whenever } \quad |z| \cdot \|\lambda'\| \leq m.
\end{equation}
Fix a vector $\eta \in L_\aH \setminus \{0\}$ realizing $\ctn(w, \, t^{\lambda'}w)$. 
Since $\eta \neq 0$, there is some constant $\veps > 0$ for which $\| \eta\| \geq \veps$. 
Note that $\veps/\|\lambda'\| > 0$. Choosing $z = \left\lfloor \frac{1}{\| \lambda' \|} \cdot m\right\rfloor \in \N$, we have $z \leq \frac{1}{\| \lambda' \|} \cdot m \leq z+1$ by definition, hence $z \geq \frac{1}{\| \lambda' \|} m -1$ while also $|z| \cdot \|\lambda'\| \leq m$. In particular, we may use \Cref{eq:linear} and obtain 
\[\Tnorm(m) \geq |z| \cdot \ctn(w, \, t^{\lambda'}w) = z \cdot \|\eta\| \geq \left( \frac{1}{\| \lambda' \|} \cdot m -1 \right) \cdot \veps = \frac{\veps}{\| \lambda' \|}m -\veps.
\]
Thus $\Tnorm$ is bounded below by the increasing affine map $a(r) = \frac{\veps}{\| \lambda' \|}r -\veps$ 
and we are done.
\end{proof}

\begin{corollary} \label{cor:thecaseofzerogrowth}
A locally compact split subgroup $\aH \leq \Isom(\E^n)$ has trivial growth for its conjugator length function if, and only if, the spherical and translation parts of $\aH$ commute. In particular, the growth is trivial if  $\aH$ is abelian or compact.
\end{corollary}

\section{Groups with nontrivial growth}

Now we turn to the cases of nonzero growth and start with assuming $\sH$ is finite. 

\begin{prop}
	\label{prop:Tnorm-growth-discrete}
	Let $\aH=T_H\rtimes \sH$ be a locally compact split subgroup of $\Isom(\E^n)$ whose translation and spherical part do not commute. If 
	$\sH$ is finite, then the function $\Tnorm$ for $\aH$ grows linearly.
\end{prop}

\begin{proof}
Recall that $H$ is compactly generated by $S$. The hypothesis $\CLF \not\qeq \mathbf{0}$ implies that $\sH$ is nontrivial and the translation lattice $L_{\aH}$ contains nonzero vectors. It suffices to prove that there is an increasing affine map $M_{\aH,S} \qeq m\mapsto m$ for which $\ctn(h,h') \leq M_{\aH,S}(m)$ whenever $h = t^\lambda h_0$ and $h' = t^{\lambda'} h_0'$ are conjugate and satisfy $\|\lambda\| + \|\lambda'\| \leq m$. 
This implies that $\Tnorm(m) \qdom M_{\aH,S}(m)$, whence $\Tnorm(m)$ grows linearly on $m$ thanks to \Cref{prop:Tnorm-lower-bound}. 
So let $h = t^\lambda h_0 \sim h' = t^{\lambda'} h_0'$ be given as above, and let $g = t^{\eta_u} u$ be a transporter that realizes $\ctn(h,h')$. We need to estimate $\|\eta_u\|$.

By \Cref{thm:coconj}, we know that $\eta_u$ satisfies \Cref{eq:eta}, that is, $\lambda' - u \lambda = (\Id-h_0') \eta_u$. 
Recall that the vector $\eta_u$ is, by assumption, shortest possible in $L_\aH$. 
Now, taking $A = \Id-h_0'$ and $b = \lambda'-u\lambda$ we see that 
\(\eta_u \in \mathcal{L}_{A,b} = \{x \in \Span_{\R}(L_H) \mid Ax = b\}. \)
Apply \Cref{prop:pseudoinverse} to see that the shortest possible element in $\mathcal{L}_{A,b}$ is 
\(
\eta_0 \define A^+ b,
\) 
where $A^+$ is the Moore--Penrose pseudoinverse of $A = \Id-h_0'$. 
Note that, since $\Range{A^+} = \Range{A^\ast}$ and $b = \lambda'-u\lambda \in L_\aH$, we get
\[
\eta_0 = A^+b \in A^\ast (L_\aH) = (\Id-h_0')^\ast (L_\aH) = (\Id-(h_0')^\ast) L_\aH = (\Id-(h_0')^{-1}) L_\aH \subseteq L_\aH
\]
as $h_0' \in \sH \subseteq \On$. Thus $\|\eta_u\| = \|\eta_0\|$ because both $\eta_u$ and $\eta_0$ are shortest possible vectors in $L_\aH$ satisfying \Cref{eq:eta}.
On the other hand, 
\Cref{lem:structureofH}(1) also implies that the set 
$\Id - \sH = \{\Id - w \mid w \in \sH\}$ 
is compact. Since $\sH$ is finite, taking operator norms $|-|$ in our ambient Euclidean space $\E^n \cong \R^n$ it is clear that the set $\{ |B^+| \, : \, B \in \Id-\sH \}$ is bounded since there are only finitely many elements in $\sH$. 
This gives us a constant $k_1 > 0$ such that $|B^+| < k_1$ for all $B \in \Id-\sH$, in particular $|A^+| < k_1$ as well. 
Note also that
\[
\|b\| = \|\lambda'-u\lambda\| \leq \|\lambda'\| + |u|\cdot \|\lambda\|.
\]
Thus, taking $k_2 = k_{2,\aH,u} = \max\{1, |u|\}$, we get 
\[
\|b\| \leq k_2 \cdot (\|\lambda'\| + \|\lambda\|).
\]
A priori, $k_2$ depends on the operator norm $|u|$. But again using that $\sH$ is compact by part~(1) of \Cref{lem:structureofH}, there is a uniform upper bound on the operator norm of elements in $\sH$. In particular, taking the constant $k_3 = k_{3,\sH} \define \max\{ |w| \, : \, w \in H_0\} > 0$, we obtain
\begin{equation} \label{eq:lastconstant}
	\|b\| \leq k_3 \cdot (\|\lambda'\| + \|\lambda\|).
\end{equation}

It follows from \Cref{eq:lastconstant} that
\[
 \|\eta_u\| = \|\eta_0\| = \|A^+ b\| \leq |A^+| \cdot \|b\|\leq k_1 k_3 (\|\lambda'\| + \|\lambda\|) \leq M_\aH(m), \]
where $M_\aH(r) \define k_1 k_3 r $. This completes the proof.
\end{proof}

For an illustration of the situation in \Cref{prop:Tnorm-growth-discrete} compare \Cref{ex:conjugatornorm}.

Recall from \Cref{lem:essentialHomomorphism} that there exists a surjection from $\aH$ onto the essential part $\til\aH$ of every split group $\aH$. We prove that the two groups involved have identical conjugator translation norm. %

\begin{prop}
	\label{prop:Tnorm-growth-essential-restriction}
	Let $\aH$ be a locally compact split subgroup of $\Isom(\E^n)$. 
	Then the functions $\Tnorm$ for $\aH$ and $\Tnorm$ for $\til\aH$ agree.
\end{prop}

\begin{proof}
Recall the projection $f : H = T_\aH \rtimes \sH \onto \til\aH = \til{T_\aH} \rtimes \til\sH$ from \Cref{lem:essentialHomomorphism} induced by the restrictions to $V = \Span_\R(L_\aH)$ on both the translation and spherical part. Then $f$ has $\ker(f) = H_0^{\mathrm{in}}$. 
Write $\Tnorm_{\til\aH}$ for the conjugator translation norm function of $\til\aH$ and similarly $\Tnorm_\aH$ for the one for $\aH$. By definition, $\Tnorm_{\til\aH}$ and $\Tnorm_{\aH}$ are suprema over values of the conjugator translation norm function $\ctn$. 
Thus we have to first analyze conjugator translation norms $\ctn(a,a')$ and $\ctn(h,h')$ for $a,a'\in\til\aH$, respectively $h,h'\in\aH$. 

Any conjugate pair $h \sim h'$ in $H$ gives rise to a conjugate pair $f(h) \sim f(h')$ in $\til\aH$. By definition, $f(t^\lambda) = t^{\lambda}|_V$ for all $t^\lambda \in T_\aH$, so that the translation vectors for $h$ and $f(h)$ are exactly the same and in particular have equal length. Moreover, $\ctn(f(h), f(h')) \leq \ctn(h,h')$. To see this, let $h=t^\lambda h_0$ and $h'=t^{\lambda'} h_0'$ be given and suppose $khk^{-1}= h'$ for some $k=t^\mu u\in\aH$ that realizes $\ctn(h,h')$. Then $f(k)$ conjugates $f(h)$ onto $f(h')$. As $f(k)$ also has translation vector $\mu$, we obtain 
	\begin{align*}
	\ctn(f(h), f(h')) 
	 & = \min \{ \| \xi \| \, : \, b=t^\xi\vert_V w\vert_V\in \til\aH \text{ and } bf(h)b^{-1}=f(h')\} \leq \|\mu\|=\ctn(h,h') 
	\end{align*} 	

We now establish an adequate converse to the above. Due to surjectivity of $f$, any two $a, a' \in \til\aH$ are images of elements $h, h' \in \aH$ under $f$. In case $a,a'$ are conjugated in the image $\til\aH=f(\aH)$, it may or may not happen that $h \sim h'$ in $\aH$. Nevertheless, we claim that for every conjugated pair $a \sim a'$ in $\til\aH$ there exist $h \in f^{-1}(\{a\})$ and $h' \in f^{-1}(\{a'\})$ for which $h \sim h'$ in $\aH$ and such that the translation vectors of $a$ and $h$ (resp. of $a'$ and $h'$) are equal, and moreover $\ctn(a,a') = \ctn(f(h), f(h')) \geq \ctn(h,h')$. 

In effect, let $a,a' \in \til\aH$ be conjugated, write $a=t^\lambda|_V u |_V$ and $a'=t^{\lambda'}|_V u'|_V$ with $\lambda, \lambda' \in L_H$, and $u,u' \in \sH$. Now let $b \in \til\aH$ be an element realizing $\ctn(a,a')$. That is, $b a b^{-1} = a'$ and there are some $\mu \in L_H$, $w \in \sH$ such that $b = t^{\mu}|_V w|_V$ and $\|\mu\| = \ctn(a,a')$. Expanding on the equality $bab^{-1}=a'$, we obtain 
	\begin{align*}
	a' & = t^{\lambda'}|_V \, u'|_V = bab^{-1} = t^{\mu}|_V \, w|_V \, t^\lambda|_V \, u|_V \, w^{-1}|_V \, t^{-\mu}|_V \\ 
	& = t^{\mu + w(\lambda) - wuw^{-1}(\mu)}|_V \, w|_V \, u|_V \, w^{-1}|_V,
	\end{align*}
	which --- by uniqueness of expression in the semi-direct product --- yields 
	\begin{equation} \label{eq:conjHtilH}
	\begin{cases} 
		{w|_V \, u|_V \, w^{-1}|_V  = (u')|_V, }\\
		\mu + w(\lambda) - wuw^{-1}(\mu) = \lambda'.
	\end{cases}
	\end{equation}
	Recall that all translation vectors $\lambda, \lambda', \mu$ lie in $L_\aH \subseteq V = \Span_\R(L_\aH)$. The first equality in Equation~\eqref{eq:conjHtilH} shows that there exists some $x \in \ker(f) = H_0^{\mathrm{in}}$ for which $w u w^{-1} = xu' \text{ in } \sH$. 
	Now define $h = t^{\lambda} u$ and $h' = t^{\lambda'} xu'$. Note that 
	\[f(h) = a \quad\text{ and }\quad f(h') = t^{\lambda'}|_V \, x|_V \, u'|_V = t^{\lambda'}|_V \, \mathrm{id}_V \, u'|_V = t^{\lambda'}|_V \, u'|_V = a'.\] 
	By design, the translation vectors for $h, h'$ are $\lambda$ and $\lambda'$, respectively. Moreover, $h$ and $h'$ are conjugated to one another via the element $k \define t^{\mu} w \in \aH$. 
	Thus 
	\[
	\ctn(f(h),f(h')) = \ctn(a,a') = \|\mu \| \geq \min\{ \|\eta\| \, : \, l=t^{\eta} y\in \aH \text{ and } lhl^{-1}=h'\} = \ctn(h,h').
	\]

	The findings of the previous paragraphs show that a pair $a,a' \in \til\aH$ is conjugated (in $\til\aH$) if and only if they are images of a pair $h,h' \in \aH$ that is conjugated (in $\aH$). Moreover, taking suprema over the inequalities obtained above for the functions $\ctn$, we obtain  
	\begin{align*}
	\Tnorm_{\aH}(m) 
	& = \sup\{ \ctn(h, h') \, : \, h=t^\lambda \, h_0 \, \sim \,  h_0'=t^{\lambda'} h_0' \, \text{ in $\aH$,  and } \, \| \lambda\| +\| \lambda'\|\leq m\} \\
	& \geq \sup\{ \ctn(f(h), f(h')) \, : \, h=t^\lambda \, h_0 \, \sim \,  h_0'=t^{\lambda'} h_0' \, \text{ in $\aH$,  and } \, \| \lambda\| +\| \lambda'\|\leq m\} \\
	& = \sup\{ \ctn(a, a') \, : \, a=t^\lambda|_V \, u|_V \, \sim \,  a'=t^{\lambda'}|_V \, u'|_V \, \text{ in } \til\aH, \, \| \lambda\| +\| \lambda'\|\leq m  \text{ and } \\
 	& \phantom{\sup\{ \ctn(a, a') : } \, \, \quad  \exists h = t^\lambda h_0 \in f^{-1}(\{a\}), \, h' = t^{\lambda'} h_0' \in f^{-1}(\{a'\}) \text{ with }  h \sim h' \text{ in } \til\aH\}\\ 
	& = \sup\{ \ctn(a, a') \, : \, a=t^\lambda|_V u|_V \, \sim \,  a'=t^{\lambda'}|_V u'|_V \, \text{ in } \til\aH, \, \| \lambda\| +\| \lambda'\|\leq m\} = \Tnorm_{\til\aH}(m)  \\
	& = \sup\{ \ctn(f(h), f(h')) \, : \, a=f(t^\lambda h_0) \, \sim \,  a'=f(t^{\lambda'}h_0') \, \text{ in } \til\aH, \, \| \lambda\| +\| \lambda'\|\leq m\} \\
	& \geq \sup\{ \ctn(h, h') \, : \, h=t^\lambda h_0 \, \sim \,  h'=t^{\lambda'}h_0' \, \text{ in } \aH, \, \| \lambda\| +\| \lambda'\|\leq m\} = \Tnorm_{\aH}(m).
	\end{align*} 
That is, $\Tnorm_{\aH}(m) = \Tnorm_{\til\aH}(m)$, as desired.
\end{proof}

\begin{corollary} \label{cor:thecaseoflineargrowth}
Let $\aH=T_H\rtimes \sH$ be a locally compact split subgroup of $\Isom(\E^n)$ whose translation and spherical part do not commute. If $\til\sH$ is discrete, then the conjugator length function of $\aH$ grows linearly. 
\end{corollary}
\begin{proof}
As before it suffices to show that $\Tnorm_H$, the conjugator translation norm for $H$, grows linearly. Since $T_\aH$ and $\sH$ do not commute, \Cref{cor:thecaseofzerogrowth} together with \Cref{prop:Tnorm-lower-bound} show that $\Tnorm$ grows at least linearly. By \Cref{prop:Tnorm-growth-essential-restriction}, we have $\Tnorm_\aH(m) = \Tnorm_{\til\aH}(m)$ for every $m$, where $\Tnorm_{\til\aH}$ is the conjugator translation norm function of $\til\aH$. By construction (cf. \Cref{lem:essentialHomomorphism}), $\til\aH = \til{T_\aH} \rtimes \til\sH$ itself is a (locally compact, split) subgroup of $\Isom(V)$, where $V=\Span_\R(L_\aH)$ is a Euclidean subspace of $\E^n$. In particular, its spherical part $\til\sH$ is compact, hence finite as we are assuming it to be discrete. Applying \Cref{prop:Tnorm-growth-discrete} to $\til\aH$, we conclude that $\Tnorm_{\til\aH}$ grows linearly, hence so does $\Tnorm_\aH$. 
\end{proof}

It thus remains to consider the case where the essential part $\widetilde{\sH}$ is nondiscrete. 

\begin{prop}
	\label{prop:Tnorm-growth-nonisolated}
	Let $\aH = T_\aH \rtimes \sH$ be a locally compact split subgroup of $\Isom(\E^n)$ whose conjugator length function has nonconstant growth rate. 
	If $\til\sH$ is nondiscrete, then the function $\Tnorm$ for $\aH$ is unbounded, i.e., $\Tnorm(m) = \infty$ for all but finitely many $m \in \N$. 
\end{prop}

\begin{proof}
	Our goal is to construct a collection of conjugated pairs $h_s \sim h_s'$ for which $\ctn(h_s, h_s')$ is bounded below by a positive value that grows with $s$, yielding unboundedness of $\Tnorm$. 
	
	Since $\aH$ has nonzero growth for its conjugator length function, we know from \Cref{cor:thecaseofzerogrowth} that $T_\aH$ and $\sH$ do not commute. In particular, there must exist at least some element $w \in \sH \setminus \{\Id\}$ acting nontrivially on the translation lattice $L_\aH$, i.e., $\Mod_\aH(w) \neq \{0\}$.
	
	We claim that something much stronger holds: for every open neighborhood $\mathcal{U} \subseteq \sH$ of the identity $\id_{\sH}=\Id$, there is some nontrivial element $w_{\mathcal{U}} \in \mathcal{U} \setminus \{\Id\}$ for which $\Mod_\aH(w_{\mathcal{U}})=(\Id-w_{\mathcal{U}}) L_\aH \neq \{0\}$. In effect, if this were not the case, then the identity $\Id$ would have an open neighborhood $\mathcal{U}$ all of whose elements act trivially --- i.e., as the identity --- on the translation lattice $L_\aH$. 
	Without loss of generality, we may assume that this $\mathcal{U}$ is diffeomorphic to an open ball of dimension $\dim(\sH)$ since $\sH$ is a (closed) Lie subgroup of $\On$. 
	Consider the subgroup $K \define \langle \mathcal{U} \rangle \leq \sH \leq \On$. Note that it is connected since $\mathcal{U}$ (hence $K$) is contained in the connected component $K^\circ$ of the identity in $K$ (whereas $K^\circ \subseteq K$ by definition), so that $K=K^\circ$. Since $\dim(K) = \dim(\sH) = \dim(\sH^\circ)$, where $\sH^\circ$ is the connected component of $\Id\in \sH$, we conclude that $K = \sH^\circ$. 
	Now observe that $K = \sH^\circ$ is contained in the inessential part $\sH^{\mathrm{in}} = \{ h_0 \in \sH \mid h_0|_V = \mathrm{id}_V \}$ because $V = \Span_\R(L_\aH)$ and all elements of $K=\langle\mathcal{U}\rangle$ are linear maps acting trivially on $L_\aH$. Thus $\sH / \sH^\circ$ surjects onto $\sH / \sH^{\mathrm{in}} \cong f(\sH) = \til\sH$. Since the connected component of the identity in a compact Lie group has finite index, it follows that $\til\sH$ is finite, hence discrete, contradicting our assumptions. 
	
	To construct our sequence of conjugated pairs, we need suitable elements from a refined open neighborhood of the identity. Consider the Lie algebra $\mathfrak{h}_0$ of $\sH$ and choose a small enough open neighborhood $\mathcal{U}$ of $\Id \in \sH$ that is diffeomorphic, via the exponential map $e : \mathbb{B} \to \mathcal{U}$, to an arbitrarily small (Euclidean) open ball $\mathbb{B} \subseteq \mathfrak{h}_0$ around the zero vector. We may then find, as seen in the previous paragraph, some nonzero matrix $X \in \mathbb{B} \subseteq \mathfrak{h}_0$ whose image $e^X \in \mathcal{U} \setminus \{\Id\} \subseteq \sH$ fulfills the condition $(\Id - e^X)L_H \neq \{0\}$. Hence there is some nonzero vector $\lambda' \in \Mod_\aH(e^X) = (\Id - e^X)L_H \subseteq L_\aH$, which gives us an element $t^{\lambda'} \in T_\aH$. We may thus consider the elements
	\[ 
	h_s \define w_s \quad \text{ and } \quad h_s' \define t^{\lambda'} w_s \quad \text{ in } \quad \aH.
	\] 

	Now look at the one-parameter group $\{e^{rX} \mid r \in \R\}$. By further shrinking $\mathcal{U}$ and $\mathbb{B}$, if necessary, the straight path $(1-s)X$ with $s \in [0,1]$ starts at $X$, touches $0$ only at the endpoint $s=1$, and lies entirely in $\mathbb{B}$. Hence its image $w_s \define e^{(1-s)X}$ lies entirely in $\mathcal{U} \subseteq \sH$ and meets the identity only at $w_1 = e^0 = \Id$. In particular, $|\Id-w_s| \neq 0$ when $s \in [0,1)$ and moreover
	\[
	\lim_{s \to 1} w_s = \Id \quad \text{ or, equivalently, } \quad |\Id - w_s | \to 0 \text{ as } s \to 1. 
	\]

	Using the same $\lambda' \in \Mod_\aH(e^X) \setminus \{0\}$ found before and taking pseudoinverses, define the vectors $\eta_s \define (\Id-w_s)^+\lambda'$ for each $s \in [0,1]$. Because $\Range{(\Id-w_s)^+} = \Range{(\Id-w_s)^\ast} = \Range{(\Id-w_s^{-1})}$ and $\lambda'\in L_\aH$, we have that $\eta_s \in L_\aH$. Thus $t^{\eta_s} \in T_\aH$ for all $s \in [0,1]$.

	By (the proof of) \Cref{lem:conjugateElements}, elements of the form $k = t^{\eta_s} u \in H$ with $u \in C_{\aH}(w_s)$ conjugate $h_s$ to $h_s'$. And recall from \Cref{thm:coconj} that any $t^{\zeta} v \in C_{\aH}(h_s, h_s')$ must satisfy \Cref{eq:eta}, so that $\lambda' = (\Id - w_s) \zeta$. But by \Cref{prop:pseudoinverse} and by design, the shortest possible vector satisfying \Cref{eq:eta} is exactly $\eta_s = (\Id-w_s)^+\lambda'$. Therefore 
	\[
	\ctn(h_s, \, h_s') = \| \eta_s \| = \|  (\Id-w_s)^+\lambda' \|.
	\]

	Finally, we observe that $\|\eta_s\|$ (and thus $\ctn(h_s, \, h_s')$) goes to $\infty$ as $s$ goes to $1$. This is clear because, as $s \to 1$, the vector $\lambda' \neq 0$ remains constant whereas the operator norm of $(\Id-w_s)^+$ increases. This can also be checked more explicitly computing norms with coordinates: For instance, for each $s \in [0,1)$ let $\Sigma_s$ denote the scaling part of the singular value decomposition of the orthogonal matrix $w_s \in \sH \subseteq \On$, where we are organizing the singular values in such a way that those valued 1 are all at the bottom right of the diagonal. That is, 
	\[
	\Sigma_s = \mathrm{diag}(\sigma_{s,1}, \ldots, \sigma_{s,r_s}, 1, \ldots, 1 )
	\text{ and } w_s = V_s \Sigma_s V_s^\star,
	\]
	where $V_s$ is a unitary matrix 
	chosen such that $\sigma_{s,k}\neq 1$ for all $k\leq r_s$. As $w_s$ converges to $\Id$, the entries $\sigma_{s,k}$ of $\Sigma_s$ must all converge to $1$. Noting that $(I-w_s)$ is a normal operator --- since $w_s$ is normal and commutes with the (normal) identity operator $\Id$ --- we compute $V_s (\Id-w_s)V_s^\star = V_s V_s^\star - V_s w_s V_s^\star = \Id- \Sigma_s$. That is, the scaling matrix of the singular value decomposition of $(\Id-w_s)$ is just $A_s\define \Id-\Sigma_s$, using the exact same unitary change-of-basis matrix $V_s$ as for $w_s$. We may then combine this with \Cref{prop:pseudoInverseViaSingularValue} to compute 
	\begin{equation*} 
		(I-w_s)^+ = V_s A_s^+ V_s^\star, \text{ where } 
		A_s^+ = \mathrm{diag}\left( \frac{1}{(1 - \sigma_{s,1})}, \ldots,  \frac{1}{(1 - \sigma_{s,r_s})}, 0, \ldots, 0\right).
	\end{equation*} 
	This shows, after applying a base-change using unitary matrices (if needed) and hence not affecting the norms of vectors, that the pseudoinverse of $(I-w_s)$ is given by the diagonal matrix $A_s^+$. By construction, since each $\sigma_{s,k}$ converges to $1$ as $s \to 1$, all non-one entries of each $A_s^+$ diverge to $\infty$ as $s \to 1$. Thus (up to change of basis) the matrix $(\Id-w_s)^+$ rescales some nonzero coordinates of $\lambda'$ by factors of the form $\frac{1}{(1 - \sigma_{s,i_s})}$ which explode to $\infty$ as $s$ goes to $1$. Therefore $\lim_{s\to 1} \|\eta_s\| = \lim_{s\to 1} \|(\Id-w_s)^+\lambda'\| = \infty$.

	It then follows, for every $m \in \N$ satisfying $ m \geq \|\lambda'\|$, that 
	\[ 
	\Tnorm(m) \geq \ctn(h_s, \, h_s') {\to} \infty \text{ as } s \to 1
	\]
	by definition of $\Tnorm$ and by the previous paragraphs. 
\end{proof}

We conclude with a remark and an example. 

\begin{remark}
	Bridson and Haefliger in \cite{BridsonHaefliger} show that the growth of conjugator length functions of $\mathrm{CAT}(0)$-groups is at most exponential. This seemingly contradicts the unbounded Case~\eqref{1notisolated} of \Cref{thm:lineargrowth}. However, \cite{BridsonHaefliger} consider finitely generated (topologically discrete) groups acting isometrically, properly discontinuously and cocompactly on a $\mathrm{CAT}(0)$-space. In this paper we consider locally compact Hausdorff groups --- which may or may not be discrete. In our setting a `geometric' action is defined slightly differently; we refer the reader to \cite[Definition~4.C.1]{Cornulier--delaHarpe} for a precise definition. Overlap with the situation in \cite{BridsonHaefliger} only exists for Cases~\eqref{trivialGrowth} and~\eqref{1isolated} in which we specify the bound and compute the precise growth type of $\CLF$. The (seemingly contradictory) unbounded Case~\eqref{1notisolated} only concerns groups not covered by the results in \cite{BridsonHaefliger}. 
\end{remark}

\begin{figure}[htb]
	\centering
	\begin{overpic}[width=0.6\linewidth, trim={19ex 12ex 10ex 0},clip]{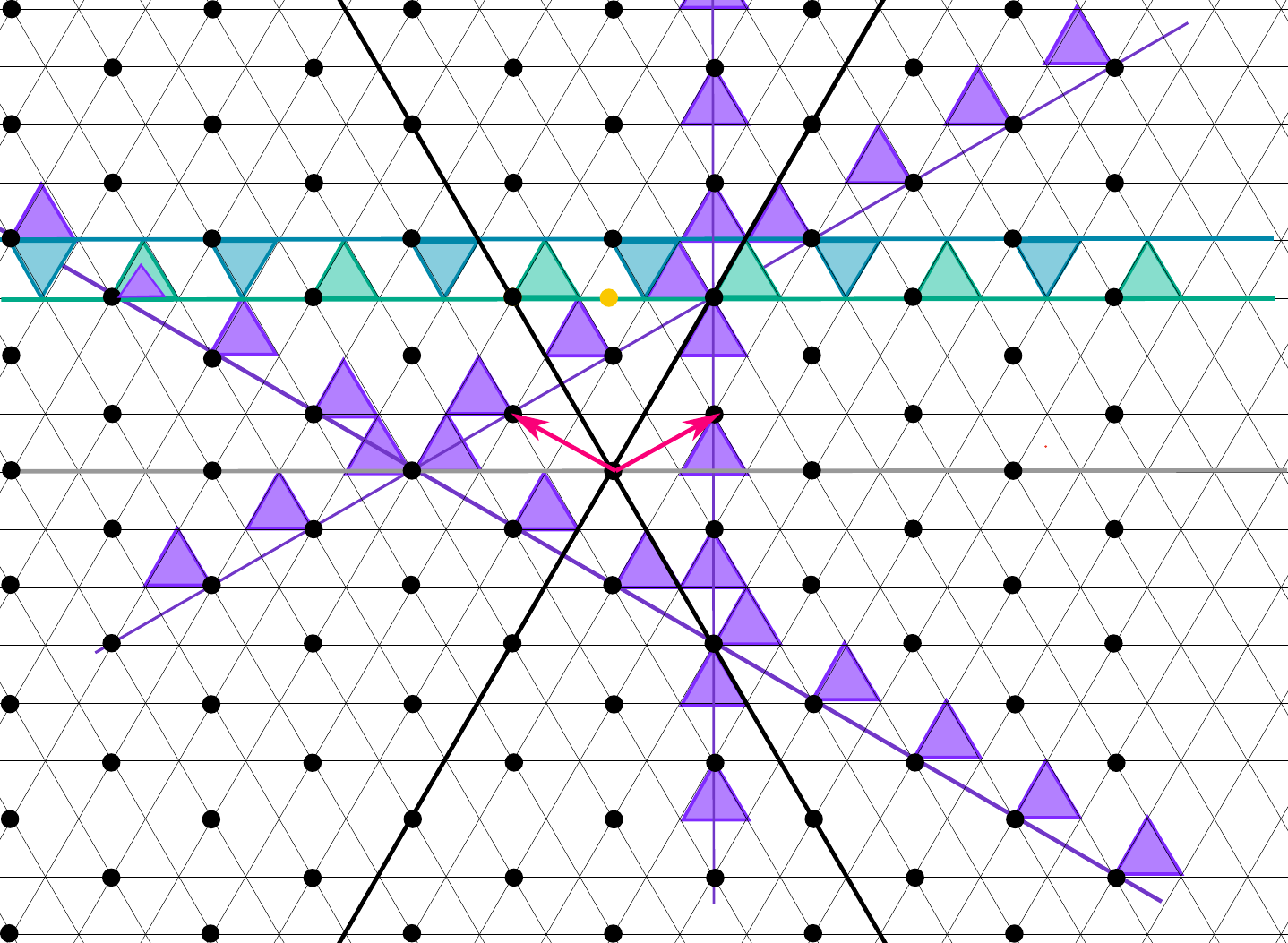}
		\put(35,53){\small{$w$}}
		\put(30,49){\small{$\eta_u$}}
		\put(35,35){\small{\textcolor{magenta}{$\alpha_2$}}}
		\put(47,35){\small{\textcolor{magenta}{$\alpha_1$}}}
		\put(42,38){\small{$\id$}}	
		\put(38,47){\small{$h$}}
		\put(52,70){\small{$h'$}}
		\put(42,53){\small{$\eta_0$}}
	\end{overpic}
	\caption{This figure illustrates \Cref{ex:conjugatornorm} and shows, among other things, the conjugacy class of the elements labeled $h$ and $h'$ (purple triangles) in a Coxeter group of type $\widetilde{\mathtt{A}}_2$.  Each turquoise element conjugates $h$ to $h'$. 
	This figure first appeared in \cite{MST5}, Example~1.1.}
	\label{fig:conjugatornorm}
\end{figure}

\begin{example}
\label{ex:conjugatornorm}
    Suppose $W$ is a Coxeter group of type $\widetilde{\mathtt{A}}_2$ generated by $S=\{s_0, s_1, s_2\}$. Then $H=W$ has spherical part isomorphic to the symmetric group on three letters (generated by $s_1$ and $s_2$), and translation part isomorphic to the group of translations by the coroot lattice $L_H\cong R^\vee$. This group acts on $\R^2$ and induces there a tiling by equilateral triangles as shown in \Cref{fig:conjugatornorm}. Each triangle corresponds to a unique group element. The triangle labeled $\id$ represents the identity. The three neighboring triangles sharing an edge with the identity triangle corresponds to the standard generators $s_i$ with $s_0$ above $\id$, the element $s_1$ to its left and $s_2$ to the right. The lattice points of the coroot lattice $L_H=R^\vee$ are marked with bold black dots in the same picture. 

    Let $h=t^\lambda s_1$ with $\lambda={\alpha_1^\vee+\alpha_2^\vee}$. The corresponding triangle in the picture is labeled with $h$. The purple elements are all conjugate to $h$. Put $h'=t^{\lambda'}w_0$ with $w_0=s_1s_2s_1$ and $\lambda'=4\alpha_1^\vee + 3\alpha_2^\vee$. The turquoise elements (both shades) are in the \emph{transporting} set $\coconjH{h}{h'}$, where we label the shortest such element by $w$. One can see that these elements are aligned on an affine translate of the fix-set of the spherical part $s_1s_2s_1$ of $h'$. The spherical part  $u$ of $w$ is $s_2$ here and  $\eta_u$ is the black vertex of the triangle labeled $w$. The yellow point corresponds to the real solution $\eta_0$ to the equation \Cref{eq:eta} and hence realizes the conjugator translator norm $\ctn(h,h')$ in the semidirect product of $\sH$ with the full group of translations $T\sim \R^n$.  
\end{example}


\renewcommand{\refname}{Bibliography}
\bibliography{bibliographyConjLength}
\bibliographystyle{alphaurl}

\end{document}